\theoremstyle{definition}
\newtheorem{Def}{Definition}[section]
\newtheorem{ex}{Example}
\newtheorem{remark}[Def]{Remark}
\theoremstyle{plain}
\newtheorem{theorem}[Def]{Theorem}
\newtheorem{proposition}[Def]{Proposition}
\newtheorem{lemma}[Def]{Lemma}
\numberwithin{equation}{section}
\newcommand{\mylabel}[2]{#2\def\@currentlabel{#2}\label{#1}}
\newcommand{\Int}{\operatorname{int}}
\newcommand{\Adj}{\mathcal{A}dj}
\newcommand{\dd}{{\rm d}}
\newcommand{\mj}{\mathbb{J}}
\newcommand{\me}{\mathbb{E}}
\newcommand{\x}{\underline{x}}
\newcommand{\uu}{\underline{u}}
\newcommand{\y}{\underline{y}}
\newcommand{\RR}{\mathbb R}
\newcommand{\NN}{\mathbb N}
\newcommand{\calB}{\mathcal B}
\newcommand{\calD}{\mathcal D}
\newcommand{\calL}{\mathcal L}
\newcommand{\calU}{\mathcal U}
\DeclareMathOperator{\ext}{ext}
\DeclareMathOperator{\vol}{vol}
\newcommand{\cB}{{\mathcal B}}
\newcommand{\cC}{{\mathcal C}}
\newcommand{\cD}{{\mathcal D}}
\newcommand{\cF}{{\mathcal F}}
\newcommand{\cI}{{\mathcal I}}
\newcommand{\cQ}{{\mathcal Q}}
\newcommand{\ba}{{\mathbf a}}
\newcommand{\bb}{{\mathbf b}}
\newcommand{\bc}{{\mathbf c}}
\title[Extreme volumes for quasi-copulas]{Extreme values of the mass distribution associated with $d$-quasi-copulas via linear programming}
\author{Matej Bel\v sak}
\address{Matej Bel\v sak, 
Faculty of Computer and Information Science, University of Ljubljana.}
\email{mb4390@student.uni-lj.si}
\author{Matja\v z Omladi\v c${}^{2}$}
\address{Matja\v z Omladi\v c,
Faculty of Mathematics and Physics, University of Ljubljana  \& Institute of Mathematics, Physics and Mechanics, Ljubljana, Slovenia.} 
\email{matjaz@omladic.net}\thanks{${}^2$Supported by the ARIS (Slovenian Research and Innovation Agency)
research core funding No. P1-0448.}
\author{Martin Vuk}
\address{Martin Vuk, 
Faculty of Computer and Information Science, University of Ljubljana.}
\email{martin.vuk@fri.uni-lj.si}
\author{Alja\v z Zalar${}^{4}$}
\address{Alja\v z Zalar, 
Faculty of Computer and Information Science, University of Ljubljana  \& 
Faculty of Mathematics and Physics, University of Ljubljana  \&
Institute of Mathematics, Physics and Mechanics, Ljubljana, Slovenia.}
\email{aljaz.zalar@fri.uni-lj.si}
\thanks{${}^4$Supported by the ARIS (Slovenian Research and Innovation Agency)
research core funding No. P1-0288 and grants No.\ J1-50002, J1-60011.}
\newcommand{\comment}[1]{}
\newcommand{\mi}{\mathbb{I}}
\newcommand{\sign}{\mathrm{sign}}
\begin{document}

\subjclass[2020]{Primary 62H05, 60A99; Secondary 60E05, 26B35.}

\date{\today}
\keywords{mass distribution, $d$-quasi-copula, volume, Lipschitz condition, bounds}

\begin{abstract}
 The recent survey \cite{ARIASGARCIA20201} nicknamed ``Hitchhiker's Guide'' has raised the rating of quasi-copula problems in the dependence modeling community in spite of the lack of statistical interpretation of quasi-copulas. 
 This paper concentrates on the Open Problem 5 of this list concerning bounds on the volume of a $d$--variate quasi-copula. We disprove a recent conjecture \cite{UF23} on the lower bound of this volume. We also give evidence that the problem is much more difficult than suspected and give some hints towards its final solution.
\end{abstract}
\maketitle

\section{Introduction}
\label{sec:intro}


The role of copulas in dependence modeling is growing ever since Sklar \cite{Sklar1959229} in 1959 observed their omnipotence. We can think of copula as a multivariate distribution with uniform margins, but when we insert into it arbitrary univariate distributions as margins, we get an arbitrary multivariate distribution and any distribution of a random vector can be acquired in this way. More details about copulas can be found in \cite{Nel05,durante2015principles}. Now, quasi-copulas have become their silent companions: they have no statistical interpretation because they may have negative volumes of certain boxes. While the upper Fr\'{e}chet-Hoeffding bound $M$, the pointwise supremum of all $d$-variate copulas, is always a copula, the lower bound $W$, the pointwise infimum of all $d$-variate copulas, is in general a quasi-copula. In case $d=2$ it is also a copula, but in case $d>2$ it is not. Since 1993 when quasi-copulas were first introduced in \cite{Als93} they have come a long way until the ``hitchhiker’s guide'' of 2020 \cite{ARIASGARCIA20201} with many references and a list of open problems.

Copula theory has been gaining in popularity perhaps due to a wide range of applications, from natural sciences and engineering through economics and finance and recently ecology and sustainable development. Their ability to describe a variety of different relationships among random variables has become indispensable. The applications of quasi-copulas mostly follow those of copulas, with some additional ones in fuzzy set theory and the theory of aggregation functions (e.g., \cites{2006217,DEBAETS20061463,KOLESAROVA20121}), as well as fuzzy logic \cite{HM08}.
This motivated further research on algebraic properties of quasi-copulas (e.g., \cites{NQMRLUF02, RODRIGUEZLALLENA2009717, NELSEN2005583, FERNANDEZSANCHEZ20111365, FERNANDEZSANCHEZ2014109}), in particular also the question of extreme values of volumes of rectangles \cites{NQMRLUF02,BMUF07} which is the main topic of this work.
We start with the usual algebraic definition of quasi-copulas.
\\

Let $\calD\subseteq [0,1]^d$ be a non-empty set and $d\in \NN$, 
$d\geq 2$. We say that a function $Q:\calD\to [0,1]$ satisfies:
\begin{enumerate}
    \item[\mylabel{BC}{(BC)}] 
    \emph{Boundary condition:}
        If $\underline u:=(u_1, \ldots, u_d)\in \calD$ is of the form:
        \begin{align*} 
        &(a)\quad 
        \underline u=(u_1, \ldots, u_{i-1}, 0, u_{i+1}, \ldots, u_d)
        \text{ for some }i, 
        \text{then }Q(\underline u)=0.\\
        &(b)\quad 
        \underline u=(1, \ldots, 1, u_i, 1, \ldots, 1)
        \text{ for some }i, 
        \text{then }Q(\underline u)=u_i.
        \end{align*}
    \item[\mylabel{Mon}{(MC)}]  
    \emph{Monotonicity condition:} 
    $Q$ is nondecreasing in every variable, i.e., for each $i=1,\ldots,d$ and each pair of $d$-tuples
    \begin{align*}
    \begin{split}
    \uu&:=(u_1, \ldots,u_{i-1},u_i,u_{i+1},\ldots,u_d)\in \calD,\\
    \widetilde\uu&:=(u_1, \ldots,u_{i-1},\widetilde u_i,u_{i+1},\ldots,u_d)\in \calD,
    \end{split}
    \end{align*}
    such that $u_i\leq \widetilde u_i$, it follows that
    $Q(\uu)\leq Q(\widetilde\uu)$.
    \item[\mylabel{LC}{(LC)}] 
    \emph{Lipschitz condition}:
        Given $d$--tuples 
        $(u_1, \ldots, u_d)$ and $(v_1, \ldots, v_d)$ 
        in $\calD$ it holds that
        \begin{align*}
        |Q(u_1, \ldots, u_d) - Q(v_1, \ldots, v_d)| \leq \sum_{i = 1}^{d}|u_i - v_i|.
        \end{align*}
\end{enumerate}
If $\calD=[0,1]^d$ and $Q$ satisfies \ref{BC}, \ref{Mon}, \ref{LC}, then $Q$ is called a \emph{$d$-variate quasi-copula} (or \emph{$d$-quasi-copula}). We will omit the dimension $d$ when it is clear from the context and write quasi-copula for short.

Let $Q$ be a quasi-copula and $\cB=\prod_{i=1}^d [a_i,b_i]\subseteq [0,1]^d$ a $d$-box with $a_i<b_i$
for each $i$.
We will use multi-indices
of the form $\mi := (\mi_1, \mi_2 \ldots \mi_d) \in \{0, 1\}^d$ to index 
$2^d$ elements $\prod_{i=1}^d\{a_i,b_i\}$
of $\cB$.
Let $\|\mi\|_1:=\sum_{j=1}^d \mi_j$ be the $1$-norm of $\mi$.
We write 
\begin{equation*}
    x_\mi:=((x_\mi)_1,\ldots,(x_\mi)_d) 
\end{equation*}
to denote the vertex with coordinates
\begin{equation*}
    (x_\mi)_k = \begin{cases}
    a_k,\quad \text{if }\mi_k=0,\\
    b_k,\quad \text{if }\mi_k=1.
    \end{cases}
\end{equation*}
Let us denote the value of $Q$ in the point $x_\mi$
by 
\begin{equation*}
    q_\mi := Q(x_\mi).
\end{equation*}
(For an ilustration of our notation in dimensions 
$d=2$ and $d=3$, see Figure \ref{fig:notation} below.)
We write $\sign(\mi) := (-1)^{d-\|\mi\|_1}$. 
The \textbf{$Q$-volume} of $\cB$ is defined by:
\begin{equation*}
    V_Q(\cB) = \sum_{\mi\in \{0, 1\}^d} \sign(\mi) \cdot q_\mi.
\end{equation*}
\bigskip

\begin{figure}
\label{fig:notation}
    \begin{minipage}{0.5\textwidth}
        \centering
        \begin{tikzpicture}
    
            \draw[dotted, thin] (0, 0) rectangle (4, 4);
            
            \coordinate (A) at (1, 1.5);  
            \coordinate (B) at (3, 1.5);  
            \coordinate (C) at (3, 2.5);  
            \coordinate (D) at (1, 2.5);  
            
            \draw[thick] (A) rectangle (C);
            
            \node at (A) [below left] {$q_{0,0}$};
            \node at (B) [below right] {$q_{0,1}$};
            \node at (C) [above right] {$q_{1,1}$};
            \node at (D) [above left] {$q_{1,0}$};
            
            \node at (1, -0.3) {$a_1$};
            \draw (1, -0.1) -- (1, 0.1);
            \node at (3, -0.3) {$b_1$};
            \draw (3, -0.1) -- (3, 0.1);
            \node at (-0.3, 1.5) {$a_2$};
            \draw (-0.1, 1.5) -- (0.1, 1.5);
            \node at (-0.3, 2.5) {$b_2$};
            \draw (-0.1, 2.5) -- (0.1, 2.5);
            \node at (-0.3, 4) {$1$};
            \draw (-0.1, 2.5) -- (0.1, 2.5);
            \node at (4, -0.3) {$1$};
            \draw (3, -0.1) -- (3, 0.1);
            
            \draw[->, thin] (-0.5, 0) -- (4.5, 0);
            \draw[->, thin] (0, -0.5) -- (0, 4.5);
        \end{tikzpicture}
    \end{minipage}\hfill
    \begin{minipage}{0.5\textwidth}
        \centering
    
        \begin{tikzpicture}
        
        \pgfmathsetmacro{\one}{4}
        \draw[dotted, thin] (0, 0, 0) -- (\one, 0, 0) -- (\one, \one, 0) -- (0, \one, 0) -- cycle;
        \draw[dotted, thin] (0, 0, \one) -- (\one, 0, \one) -- (\one, \one, \one) -- (0, \one, \one) -- cycle;
        \draw[dotted, thin] (0, 0, 0) -- (0, 0, \one);
        \draw[dotted, thin] (\one, 0, 0) -. (\one, 0, \one);
        \draw[dotted, thin] (\one, \one, 0) -. (\one, \one, \one);
        \draw[dotted, thin] (0, \one, 0) -. (0, \one, \one);
        \pgfmathsetmacro{\aa}{1.5}
        \pgfmathsetmacro{\ab}{1.5}
        \pgfmathsetmacro{\ac}{1.5}
        \pgfmathsetmacro{\ba}{3.2}
        \pgfmathsetmacro{\bb}{3.2}
        \pgfmathsetmacro{\bc}{3.2}
        
        \coordinate (A) at (\aa, \ab, \ac); 
        \coordinate (B) at (\ba, \ab, \ac); 
        \coordinate (C) at (\ba, \bb, \ac); 
        \coordinate (D) at (\aa, \bb, \ac); 
        \coordinate (E) at (\aa, \ab, \bc); 
        \coordinate (F) at (\ba, \ab, \bc); 
        \coordinate (G) at (\ba, \bb, \bc); 
        \coordinate (H) at (\aa, \bb, \bc); 
        
        \draw[thick] (A) -- (B) -- (C) -- (D) -- cycle;
        \draw[thick] (E) -- (F) -- (G) -- (H) -- cycle;
        \draw[thick] (A) -- (E);
        \draw[thick] (B) -- (F);
        \draw[thick] (C) -- (G);
        \draw[thick] (D) -- (H);
        
        \node at (\aa + 0.5, \ab + 0.2, \ac) {$q_{0,0,0}$};
        \node at (\ba + 0.5, \ab + 0.1, \ac) {$q_{1,0,0}$};
        \node at (\aa + 0.5, \bb + 0.2, \ac) {$q_{0,1,0}$};
        \node at (\ba + 0.5, \bb + 0.1, \ac) {$q_{1,1,0}$};
        \node at (\aa + 0.7, \ab + 0.2, \bc)  {$q_{0,0,1}$};
        \node at (\ba + 0.5, \ab + 0.0, \bc){$q_{1,0,1}$};
        \node at (\aa + 0.7, \bb + 0.2, \bc) {$q_{0,1,1}$};
        \node at (\ba +0.5, \bb, \bc) {$q_{1,1,1}$};
        
        \node at (\aa, -0.3, 0) {$a_1$};
        \draw (\aa, -0.1, 0) -- (\aa, 0.1, 0);
        \node at (\ba, -0.3, 0) {$b_1$};
        \draw (\ba, -0.1, 0) -- (\ba, 0.1, 0);
        \node at (-0.3, \ab, 0) {$a_2$};
        \draw (-0.1, \ab, 0) -- (0.1, \ab, 0);
        \node at (-0.3, \bb, 0) {$b_2$};
        \draw (-0.1, \bb, 0) -- (0.1, \bb, 0);
        \node at (-0.3, 0, \ac) {$a_3$};
        \draw (-0.1, 0, \ac) -- (0.1, 0, \ac);
        \node at (-0.3, 0, \bc) {$b_3$};
        \draw (-0.1, 0, \bc) -- (0.1, 0, \bc);
        \node at (-0.3, 0, \one) {$1$};
        \node at (-0.3, \one, 0) {$1$};
        \node at (\one, -0.3, 0) {$1$};
        
        \draw[->, thin] (0,0,0) -- (4.5,0,0);
        \draw[->, thin] (0,0,0) -- (0,4.5,0);
        \draw[->, thin] (0,0,0) -- (0,0,4.5);
        
        \end{tikzpicture}
    \end{minipage}
    \caption{An illustration of the notation we use to denote the vertices of the box $\cB$ and the values of the $d$-quasi-copula $Q$ in those vertices. For $d=2$ the box $\cB = [a_1, b_1]\times [a_2, b_2]$ has four vertices $x_{0,0} = (a_1, a_2)$, $x_{1,0} = (b_1, a_2)$, $x_{0,1} = (a_1, b_2)$, $x_{1,1} = (b_1, b_2)$ that are indexed by four multi-indices $\mi^{(1)} = (0,0)$, $\mi^{(2)} = (1,0)$, $\mi^{(3)} =(0,1)$ and $\mi^{(4)} = (1, 1)$. The value of $Q$ at the vertex $x_{\mi^{(k)}}$ is denoted by $q_{\mi^{(k)}}$. For $d=2$ we have $q_{0,0} = Q(a_1, a_2)$, $q_{1,0} = Q(b_1, a_2)$, $q_{0,1} = Q(a_1, b_2)$, $q_{1,1} = Q(b_1, b_2)$, while for $d=3$ we have
    $q_{0,0,0} = Q(a_1, a_2, a_3)$, $q_{1,0, 0} = Q(b_1, a_2, a_3)$, $q_{0,1,0} = Q(a_1, b_2, a_3)$, $q_{0, 0, 1} = Q(a_1, a_2, b_3)$,
    $q_{1,1,0} = Q(b_1, b_2, a_3)$, $q_{1,0, 1} = Q(b_1, a_2, b_3)$, $q_{1,1,0} = Q(b_1, b_2, a_3)$, $q_{1, 1, 1} = Q(b_1, b_2, b_3)$.}
    \label{fig:enter-label}
\end{figure}

In this paper we fix a dimension $d\ge2$ and consider extreme values of the $Q$-volume $V_Q(\cB)$ over all quasi-copulas $Q$ and all $d$-boxes $\cB$. This is called Open Problem 5 on the list of the ``hitchhiker’s guide'' \cite{ARIASGARCIA20201}. It has recently been conjectured \cite{UF23} that the minimal value is $\displaystyle-\frac{(d-1)^2}{2d-1}$, attained for some $Q$ on
$\displaystyle\cB=\Big[\frac{d-1}{2d-1},\frac{2d-2}{2d-1}\Big]^d$. This conjecture is based on the results for $d=2$ \cite{Nelsen2002}, proved analytically, $d=3$ \cite{BMUF07} and $d=4$ \cite{UF23}, proved via linear programming using software tool \textit{Mathematica}.
We were able to extend the linear programming approach to solve \cite[Open Problem 5]{ARIASGARCIA20201} up to $d=9$ using \textit{Mathematica} \cite{ram2024} and programming language \textit{Julia} \cite{bezanson_julia_2017} up to $d=18$ for the minimal values and $d=17$ for the maximal values. (This is due to computer memory limits.) Our results disprove the conjecture of \cite{UF23}. Indeed, based on the results for $d\leq 18$ (resp.\ $d\leq 17$) the minimal and maximal values seem to follow the exponential function of the form $c2^{d}+d$ for suitable $c,d\in \RR$. For the maximal value the boxes $\cB$ are of the form $[a,1]^d$ for $2\leq d\leq 17$, while for minimal values this is true for $7\leq d\leq 18$.\\

The organization of the paper is the following. In Section \ref{preparatory} we prove that it suffices to relax the problem of extremes of $V_Q(\cB)$ over all $d$-quasi-copulas to the problem of extremes of functions $Q$ defined on a $2^d$-element grid of the vertices of $\cB$ or a $3^d$--element grid obtained by allowing values 1 for the coordinates of points, such that three types of conditions coming from \ref{BC}, \ref{Mon}, \ref{LC} are satisfied (see Theorems \ref{main} and \ref{main-v2}). The proof of these results is constructive by extending the function from the grid to the whole $d$-box $[0,1]^d$. These results enable us to formulate the problem in terms of two linear programs in Section 3 
(see \eqref{LP} and \eqref{LP-extension}). Numerical results obtained are presented in Tables \ref{tab:table1} (for the minimal values), \ref{tab:table2} (for the maximal values) and graphically on Figure \ref{fig:figure1}.


\section{Relaxation of the problem}
\label{preparatory}

The first main result of this section, Theorem \ref{main}, states that every function $Q$ defined on the vertices of a $d$-box $\prod_{i=1}^d [a_i,1]\subseteq [0,1]^d$ 
satisfying certain conditions coming from the definition of a quasi-copula,
extends to a quasi-copula. 
This enables us to formulate a relaxation of the problem of extremes of $V_Q(\cB)$ over all quasi-copulas $Q$ and all $d$-boxes $\cB$
as a linear program in Section \ref{sec:results}.

The second main result, Theorem \ref{main-v2}, is analogous to Theorem \ref{main},
where the $2^d$-element grid $\prod_{i=1}^d \{a_i,1\}$ is replaced by a $3^d$-element
grid $\prod_{i=1}^d \{a_i,b_i,1\}$. We need this result in Section \ref{sec:results} to solve those cases, where the relaxation of the volume problem does not give grids of the form
$\prod_{i=1}^d \{a_i,1\}$ as a result. This happens for minima of $V_Q(\cB)$ in dimensions $d\leq 6$.
\\

Fix $d\in \NN$, $d\geq 2$. For $s\in\NN$ we write 
\begin{align*} 
[s]
&:=\{0,1,\ldots,s\}
\quad\text{and}\quad
[-s;s]:=\{-s,-s+1,\ldots,-1,0,1,\ldots,s\}.
\end{align*}
For multi-indices 
$$\mi=(\mi_1,\ldots,\mi_d)\in [s]^d\quad \text{and} \quad\mj=(\mj_1,\ldots,\mj_d)\in [s]^d$$ 
let
    $$\mj-\mi=(\mj_1-\mi_1,\ldots,\mj_d-\mi_d)\in [-s;s]^d$$ 
stand for their usual coordinate-wise difference.
Let 
$\me^{(\ell)}$ stand for the multi-index with the only non-zero coordinate the $\ell$--th one, which is equal to 1.
For each $\ell=1,\ldots,d$ we define a relation $\sim_\ell$ on $[s]^d$ by
    $$
        \mi \sim_\ell \mj 
        \quad \Leftrightarrow \quad
        \mj-\mi=\me^{(\ell)}.    
    $$

For a tuple $\x=(x_1,\ldots,x_d)$ we define the functions
\begin{align*}
    G_d:\RR^d\to \RR,\quad 
    G_d(\x)
    &:=\sum_{i=1}^d x_i-d+1,\\
    H_d:\RR^d\to \RR,\quad
    H_d(\x)
    &:=\min\{x_1, x_2, \ldots, x_d\}.
\end{align*}
\smallskip

Let $\calD=\prod_{i=1}^d\{a_i,1\}$ be a $2^d$-element set with 
$0\leq a_i<1$ for each $i$.
For $\mi\in \{0,1\}^d$ let $x_\mi:=((x_\mi)_1,\ldots,(x_\mi)_d)\in\calD$, where  
\begin{equation}
\label{def:xI-v4}
    (x_\mi)_k = 
    \left\{
    \begin{array}{rl}
    a_k,\quad \text{if }\mi_k=0,\\
    1,\quad \text{if }\mi_k=1.
    \end{array}
    \right.
\end{equation}

The first main result of this section is the following.

\begin{theorem}
\label{main}
Fix $d\in \NN$, $d\geq 2$. 
Given 
\begin{itemize}
    \item nonnegative real numbers $a_{i}\in [0,1)$, $i=1,\ldots,d$,
    \item points $\displaystyle x_\mi\in \prod_{i=1}^d\{a_i,1\}$ defined by \eqref{def:xI-v4} and
    \item nonnegative real numbers 
        $q_\mi\in [0,1]$ for all $\mi\in \{0,1\}^d$,
\end{itemize} 
such that
\begin{align}
\label{th:main-cond-2}
\begin{split}
&
0\le q_{\mj} - q_{\mi} \le 1 - a_\ell
\quad 
    \text{for all }\ell=1,\ldots,d
    \;
    \text{and all }\mi\sim_\ell \mj,
    \\
&G_d(x_{\mi})
    \le q_\mi \le 
    H_d(x_{\mi})
    \quad \text{for all }\mi\in \{0,1\}^d,
\end{split}
\end{align}
there exists a quasi-copula $Q:[0,1]^d\to [0,1]$
satisfying
    $$
    Q(x_\mi)=q_\mi \quad \text{for all } \mi\in \{0,1\}^d.
    $$
\end{theorem}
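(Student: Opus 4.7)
The plan is to first enlarge the given data from the $2^d$-grid $\prod_i \{a_i,1\}$ to the $3^d$-grid $\prod_i \{0, a_i, 1\}$ by imposing the boundary value $0$ coming from BC-a, and then to extend to all of $[0,1]^d$ by piecewise multilinear interpolation on the subboxes of this finer grid. Concretely, define $\widetilde Q$ on $\prod_i \{0, a_i, 1\}$ by $\widetilde Q(x) = 0$ whenever some coordinate of $x$ equals $0$, and $\widetilde Q(x_\mi) = q_\mi$ otherwise. Since $G_d(1,\dots,1) = H_d(1,\dots,1) = 1$ and $G_d(1,\dots,1,a_i,1,\dots,1) = H_d(1,\dots,1,a_i,1,\dots,1) = a_i$, the bounds in \eqref{th:main-cond-2} force $q_{(1,\dots,1)} = 1$ and $q_\mi = a_i$ whenever $\mi$ has all entries equal to $1$ except a single $0$ in position $i$. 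Hence the $\widetilde Q$-values on the face $\{x_i = 1\}$ already coincide with what BC-b requires.

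Next, partition $[0,1]^d$ into the (at most) $2^d$ closed subboxes $R = \prod_i [c_i^-, c_i^+]$ with $c_i^-, c_i^+$ consecutive elements of $\{0, a_i, 1\}$, and on each $R$ let $Q$ be the unique multilinear function agreeing with $\widetilde Q$ at the $2^d$ corners of $R$. These pieces glue to a continuous function $Q:[0,1]^d \to \RR$ because the multilinear interpolations from two adjacent subboxes restrict on the shared face to the same lower-dimensional multilinear interpolation of the face-corners. Axiom BC-a is then automatic since all corner values on the face $\{x_i = 0\}$ vanish, and BC-b reduces to a one-variable computation on the segment $(1,\dots,1,u_i,1,\dots,1)$, where the subbox with $x_i \in [0, a_i]$ linearly interpolates between $0$ and $a_i$ and the subbox with $x_i \in [a_i, 1]$ between $a_i$ and $1$, both giving $u_i$. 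For MC and LC the key observation is standard: inside each (rescaled) subbox $R$ the partial derivative $\partial Q/\partial x_j$ is a convex combination, with nonnegative weights summing to $1$, of the $2^{d-1}$ edge-increments $(\widetilde Q(y^+)-\widetilde Q(y^-))/(c_j^+-c_j^-)$ of $R$ parallel to coordinate $j$. It therefore suffices to show that every such edge-increment lies in $[0,\, c_j^+ - c_j^-]$; continuity across subbox boundaries then upgrades the resulting coordinate-wise $1$-Lipschitz bound to the global $\ell^1$-Lipschitz bound demanded by LC.

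The main obstacle is establishing the edge-increment bounds. Fix an edge parallel to coordinate $j$ in some subbox $R$. If some other coordinate of its endpoints equals $0$, then both values of $\widetilde Q$ vanish and the increment is $0$. Otherwise, when $[c_j^-, c_j^+] = [a_j, 1]$ both endpoints belong to the original $2^d$-grid and the increment $q_{\mj'} - q_{\mi'}$ with $\mi' \sim_j \mj'$ lies in $[0, 1 - a_j]$ by the first line of \eqref{th:main-cond-2}; when $[c_j^-, c_j^+] = [0, a_j]$ the lower endpoint has $x_j = 0$ and so $\widetilde Q = 0$, while the upper endpoint value equals $q_{\mi'}$ for some $\mi'$ with $(x_{\mi'})_j = a_j$, yielding $0 \le q_{\mi'} \le H_d(x_{\mi'}) \le (x_{\mi'})_j = a_j$ by the second line of \eqref{th:main-cond-2}. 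This is precisely where both halves of the hypothesis \eqref{th:main-cond-2} enter: the Lipschitz-type bound governs edges that sit inside the original box $\prod_i [a_i,1]$, while the upper Fr\'echet bound $q_\mi \le H_d(x_\mi)$ is exactly what is needed on edges touching the zero-face. With these bounds in hand, MC follows from nonnegativity of all edge-increments and LC is obtained as indicated above, completing the construction.
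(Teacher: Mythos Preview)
Your proof is correct and follows essentially the same approach as the paper: extend the data to the grid $\prod_i\{0,a_i,1\}$ by zeros on the coordinate faces, define $Q$ as the piecewise multilinear interpolant on the resulting $2^d$ subboxes, and verify \ref{Mon} and \ref{LC} by reducing to the edge increments and invoking the hypotheses \eqref{th:main-cond-2}. The only cosmetic difference is that the paper writes the same piecewise multilinear function as the cumulative integral of a piecewise constant density and packages the edge-to-box lift into separate propositions (one for \ref{LC}, one for \ref{Mon}), whereas you argue directly via the convex-combination formula for $\partial Q/\partial x_j$; the underlying construction and the places where each half of \eqref{th:main-cond-2} is used are identical.
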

\medskip


Let us demonstrate the statement of Theorem \ref{main}
in dimension $d=3$.

\begin{ex}
For $d=3$ we have 
\begin{align*}
    \{0,1\}^3
    &=    
    \big\{
        \underbrace{(0,0,0)}_{\mi^{(1)}},
        \underbrace{(1,0,0)}_{\mi^{(2)}},
        \underbrace{(0,1,0)}_{\mi^{(3)}},
        \underbrace{(0,0,1)}_{\mi^{(4)}},
        \underbrace{(1,1,0)}_{\mi^{(5)}},
        \underbrace{(1,0,1)}_{\mi^{(6)}},
        \underbrace{(0,1,1)}_{\mi^{(7)}},
        \underbrace{(1,1,1)}_{\mi^{(8)}},
    \big\}.
\end{align*}
Note that
\begin{align*}
    &\mi^{(1)}\sim_1 \mi^{(2)},
    &\mi^{(3)}&\sim_1 \mi^{(5)},
    &\mi^{(4)}&\sim_1 \mi^{(6)},
    &\mi^{(7)}&\sim_1 \mi^{(8)},\\
    &\mi^{(1)}\sim_2 \mi^{(3)},
    &\mi^{(2)}&\sim_2 \mi^{(5)},
    &\mi^{(4)}&\sim_2 \mi^{(7)},
    &\mi^{(6)}&\sim_2 \mi^{(8)},\\
    &\mi^{(1)}\sim_3 \mi^{(4)},
    &\mi^{(2)}&\sim_3 \mi^{(6)},
    &\mi^{(3)}&\sim_3 \mi^{(7)},
    &\mi^{(5)}&\sim_3 \mi^{(8)}.
\end{align*}
Given 
    \begin{align}
    \label{th:main-3-ex-cond-1}
    \begin{split}
        &a_1,a_2,a_3\in [0,1),\\
        &x_{\mi^{(1)}}=(a_1,a_2,a_3),\quad
        x_{\mi^{(2)}}=(1,a_2,a_3),\quad
        x_{\mi^{(3)}}=(a_1,1,a_3),\\
        &x_{\mi^{(4)}}=(a_1,a_2,1),\quad
        x_{\mi^{(5)}}=(1,1,a_3),\quad
        x_{\mi^{(6)}}=(1,a_2,1),\\
        &x_{\mi^{(7)}}=(a_1,1,1),\quad
        x_{\mi^{(8)}}=(1,1,1)
        \quad\text{and}\quad\\
        &q_{\mi^{(1)}},q_{\mi^{(2)}},q_{\mi^{(3)}},q_{\mi^{(4)}},
        q_{\mi^{(5)}},q_{\mi^{(6)}},q_{\mi^{(7)}},q_{\mi^{(8)}}
        \in [0,1]
    \end{split}
    \end{align}
conditions \eqref{th:main-cond-2} of Theorem \ref{main} are
\begin{align}
    \label{th:main-3-ex-cond-2}
    \begin{split}
    0&\leq q_{\mi^{(2)}}-q_{\mi^{(1)}}\leq 1-a_1,\quad
        0\leq q_{\mi^{(5)}}-q_{\mi^{(3)}}\leq 1-a_1,\\
    0&\leq q_{\mi^{(6)}}-q_{\mi^{(4)}}\leq 1-a_1,\quad
        0\leq q_{\mi^{(8)}}-q_{\mi^{(7)}}\leq 1-a_1,\\
    0&\leq q_{\mi^{(3)}}-q_{\mi^{(1)}}\leq 1-a_2,\quad
        0\leq q_{\mi^{(5)}}-q_{\mi^{(2)}}\leq 1-a_2,\\
    0&\leq q_{\mi^{(7)}}-q_{\mi^{(4)}}\leq 1-a_2,\quad
        0\leq q_{\mi^{(8)}}-q_{\mi^{(6)}}\leq 1-a_2,\\
    0&\leq q_{\mi^{(4)}}-q_{\mi^{(1)}}\leq 1-a_3,\quad
        0\leq q_{\mi^{(6)}}-q_{\mi^{(2)}}\leq 1-a_3,\\
    0&\leq q_{\mi^{(7)}}-q_{\mi^{(3)}}\leq 1-a_3,\quad
        0\leq q_{\mi^{(8)}}-q_{\mi^{(5)}}\leq 1-a_3,\\
    a_1+a_2+a_3-2
    &\leq q_{\mi^{(1)}}\leq \min\{a_1,a_2,a_3\},\\
    a_2+a_3-1
    &\leq q_{\mi^{(2)}}\leq \min\{a_2,a_3\},\\
    a_1+a_3-1
    &\leq q_{\mi^{(3)}}\leq \min\{a_1,a_3\},\\
    a_1+a_2-1
    &\leq q_{\mi^{(4)}}\leq \min\{a_1,a_2\},\\
    a_3
    &\leq q_{\mi^{(5)}}\leq a_3,\\
    a_2
    &\leq q_{\mi^{(6)}}\leq a_2,\\
    a_1
    &\leq q_{\mi^{(7)}}\leq a_1,\\
    1
    &\leq q_{\mi^{(8)}}\leq 1,\\
    \end{split}
\end{align}
The conclusion of Theorem \ref{main} is that for
    \eqref{th:main-3-ex-cond-1} 
satisfying
    \eqref{th:main-3-ex-cond-2},
there exists a quasi-copula $Q:[0,1]^2\to [0,1]$
such that 
\begin{align*}
    Q(a_1,a_2,a_3)&=q_{\mi^{(1)}},
        &Q(1,a_2,a_3)&=q_{\mi^{(2)}},
        &Q(a_1,1,a_3)&=q_{\mi^{(3)}},\\
    Q(a_1,a_2,1)&=q_{\mi^{(4)}},
        &Q(1,1,a_3)&=q_{\mi^{(5)}},
        &Q(1,a_2,1)&=q_{\mi^{(6)}},\\
    Q(a_1,1,1)&=q_{\mi^{(7)}},
        &Q(1,1,1)&=q_{\mi^{(8)}}.
\end{align*}

Note that
    $q_{\mi^{(5)}}=a_3$,
    $q_{\mi^{(6)}}=a_2$,
    $q_{\mi^{(7)}}=a_1$,
    $q_{\mi^{(8)}}=1,$ 
which must clearly hold, since $Q$ is a quasi-copula.
\hfill$\blacksquare$
\end{ex}
\bigskip


Let now $\calD=\prod_{i=1}^d\{a_i,b_i,1\}$ be a $3^d$-element set with 
$0\leq a_i<b_i<1$ for each $i$.
For $\mi\in \{0,1,2\}^d$ let $x_\mi:=((x_\mi)_1,\ldots,(x_\mi)_d)\in\calD$, where  
\begin{equation}
\label{def:xI-v3}
    (x_\mi)_k = 
    \left\{
    \begin{array}{rl}
    a_k,\quad \text{if }\mi_k=0,\\
    b_k,\quad \text{if }\mi_k=1,\\
    1,\quad \text{if }\mi_k=2.
    \end{array}
    \right.
\end{equation}

The second main result of this section is the following.

\begin{theorem}
\label{main-v2}
Fix $d\in \NN$,  $d\geq 2$. 
Given 
\begin{itemize}
\item nonnegative real numbers 
    $a_{i},b_i\in [0,1]$, $i=1,\ldots,d$, 
\item points $\displaystyle x_\mi\in \prod_{i=1}^d\{a_i,b_i,1\}$ defined by \eqref{def:xI-v3}
and
\item nonnegative real numbers $q_\mi\in [0,1]$ for all $\mi\in \{0,1,2\}^d$, 
\end{itemize}
such that
\begin{align}
\label{th:main-v2-cond-1}
\begin{split}
& 
    a_\ell<b_\ell\quad \text{for }\ell=1,\ldots,d,
    \\
&
0\le q_{\mj} - q_{\mi} \le b_\ell - a_\ell
\quad 
    \text{for all }\ell=1,\ldots,d
    \;
    \text{and all }\mi\sim_\ell\mj \text{ with }\mj_\ell=1,
    \\
&
0\le q_{\mj} - q_{\mi} \le 1 - b_\ell
\quad 
    \text{for all }\ell=1,\ldots,d
    \;
    \text{and all }\mi\sim_\ell\mj \text{ with }\mj_\ell=2,
    \\
&G_d(x_{\mi})
    \le q_\mi \le 
    H_d(x_{\mi})
    \quad \text{for all }\mi\in \{0,1,2\}^d,
\end{split}
\end{align}
there exists a quasi-copula $Q:[0,1]^d\to [0,1]$
such that 
    $$
    Q(x_\mi)=q_\mi \quad \text{for all } \mi\in \{0,1,2\}^d.
    $$
\end{theorem}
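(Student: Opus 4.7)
The plan is to construct $Q$ explicitly as a continuous piecewise-multilinear function on a refinement of $[0,1]^d$. First, I would augment the given $3^d$-element grid $\prod_i\{a_i,b_i,1\}$ to the $4^d$-element grid $\prod_i\{0,a_i,b_i,1\}$ by assigning $q_\mi=0$ at every new grid point with some zero coordinate; this is forced by \ref{BC}. The grid hyperplanes partition $[0,1]^d$ into up to $3^d$ axis-aligned closed sub-boxes, and on each sub-box I would set $Q$ to be the unique multilinear interpolation of the values at its $2^d$ vertices. Because the multilinear interpolants on two adjacent sub-boxes agree on their shared face (both are determined there by the same $2^{d-1}$ vertex values), this yields a well-defined continuous function $Q:[0,1]^d\to[0,1]$ with $Q(x_\mi)=q_\mi$ on the entire grid.

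I would then verify \ref{BC} and \ref{Mon} directly. For part (a) of \ref{BC}, any point with some coordinate equal to $0$ lies in a sub-box all of whose vertices on the corresponding face carry the value $0$, so multilinearity gives $Q=0$ there. For part (b), the inequality $G_d(x_\mi)\le q_\mi\le H_d(x_\mi)$ forces $q_\mi$ to equal the unique non-$1$ coordinate of $x_\mi$ whenever $d-1$ of its coordinates are $1$, and multilinear interpolation along the axis $\{(1,\ldots,1,u,1,\ldots,1)\}$ preserves this identity in $u$. For \ref{Mon}, a direct computation shows that on each sub-box $\prod_i[c_i^-,c_i^+]$ the partial derivative $\partial Q/\partial x_\ell$ is, pointwise, a convex combination (with nonnegative weights summing to $1$) of the renormalized $\ell$-differences $(q_\mj-q_\mi)/(c_\ell^+-c_\ell^-)$ taken over the $2^{d-1}$ $\ell$-adjacent vertex pairs of that sub-box; all these differences are nonnegative by \eqref{th:main-v2-cond-1} (and by the value $0$ assigned on the newly added lower faces).

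The main obstacle is \ref{LC}. The same convex-combination formula shows that on each sub-box $|\partial Q/\partial x_\ell|$ is bounded by the largest normalized $\ell$-difference on that sub-box. The hypotheses \eqref{th:main-v2-cond-1} are tuned so that each such difference is at most the corresponding step size: $b_\ell-a_\ell$ on the middle slab and $1-b_\ell$ on the upper slab, so each normalized slope lies in $[0,1]$; on the newly added lower slab the $\ell$-differences are of the form $q_\mi-0=q_\mi$, bounded above by $H_d(x_\mi)\le a_\ell$, again yielding a normalized slope in $[0,1]$. Hence $|\partial Q/\partial x_\ell|\le 1$ on every sub-box, for every $\ell$. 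Because $Q$ is globally continuous and the straight segment from $\mathbf{u}$ to $\mathbf{v}$ meets only finitely many sub-boxes, integrating $\nabla Q$ piecewise along this segment and summing gives $|Q(\mathbf{u})-Q(\mathbf{v})|\le\sum_\ell|u_\ell-v_\ell|$. The subtlety in this last step is the standard one for gluing piecewise-$C^1$ Lipschitz pieces across common faces, handled by continuity together with the uniform per-piece slope bound.
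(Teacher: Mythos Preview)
Your construction and verification are correct and coincide with the paper's: the paper builds the same piecewise-multilinear extension (phrased there via integrating a piecewise-constant density over the $3^d$ sub-boxes of $[0,1]^d$) and then reduces \ref{Mon} and \ref{LC} to the vertex data via its Propositions~\ref{prop:ml-lipschitz}--\ref{prop:ml-monotonocity} together with Lemmas~\ref{lem:lipschitz-one-dim}--\ref{lem:lipschitz-piecewise}, which is exactly your convex-combination bound on $\partial Q/\partial x_\ell$ stated in different language. The only cosmetic difference is that the paper glues Lipschitz pieces coordinate-by-coordinate rather than along the straight segment, but both routes use the same per-box slope bound coming from \eqref{th:main-v2-cond-1} and the $H_d$-inequality on the added lower slab.
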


We will give constructive proofs of Theorems \ref{main} and \ref{main-v2}. First we establish some auxiliary results.\\

The next two lemmas are very basic and follow immediately from the triangle inequality. The first lemma states that the Lipschitz condition in the multidimensional case is equivalent to the Lipschitz condition in each dimension, while the second lemma reduces the Lipschitz condition in the univariate case to the piecewise Lipschitz condition. We mention that in literature the contents of these lemmas fall under the name \emph{local versus global Lipschitz condition} (e.g., \cite{Hei05}). However, usualy the underlying norm is the usual Euclidean, while we need results for the $\ell^1$ norm. Therefore we include the proofs of both lemmas for the sake of completeness. 

\begin{lemma}\label{lem:lipschitz-one-dim}
Let $\delta_1,\ldots,\delta_d$ be subsets of $[0,1]$ 
and
$\calD=\prod_{i=1}^d \delta_i\subseteq [0,1]^d$.
Let $F:\calD\to [0,1]$ be a function. 
The following statements are equivalent:
\begin{enumerate}
    \item\label{LC-pt1} 
        $F$ satisfies the Lipschitz condition on $\calD$.
    \item\label{LC-pt2}
        $F$ satisfies the Lipschitz condition in each variable,
        i.e., if 
        \begin{align*}
            \x&:=
            (a_1,\ldots,a_{i-1},x,a_{i+1},\ldots,a_d)\in \calD,\\
            \y&:=
            (a_1,\ldots,a_{i-1},y,a_{i+1},\ldots,a_d)\in \calD,
        \end{align*} 
        then
        \begin{equation*}
            |F(\x) - F(\y)| \le |x - y|. 
        \end{equation*}
\end{enumerate} 
\end{lemma}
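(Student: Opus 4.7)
The plan is to prove the two implications separately, with \ref{LC-pt1} $\Rightarrow$ \ref{LC-pt2} being immediate and the real content lying in the converse.

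For \ref{LC-pt1} $\Rightarrow$ \ref{LC-pt2}, I would just apply the Lipschitz inequality from \ref{LC-pt1} to the specific pair $\x,\y\in\calD$ that differ only in the $i$-th coordinate; all terms in the sum $\sum_j|x_j-y_j|$ vanish except the $i$-th, yielding the univariate Lipschitz inequality.

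For the converse \ref{LC-pt2} $\Rightarrow$ \ref{LC-pt1}, the strategy is the standard coordinate-by-coordinate interpolation. Given $\x=(x_1,\ldots,x_d)$ and $\y=(y_1,\ldots,y_d)$ in $\calD$, I would define the chain
$$
\mathbf{z}^{(0)}:=\x,\qquad \mathbf{z}^{(k)}:=(y_1,\ldots,y_k,x_{k+1},\ldots,x_d)\quad (k=1,\ldots,d),
$$
so that $\mathbf{z}^{(d)}=\y$ and consecutive members $\mathbf{z}^{(k-1)}$, $\mathbf{z}^{(k)}$ differ only in the $k$-th coordinate. Then by \ref{LC-pt2} applied at each step and the triangle inequality,
$$
|F(\x)-F(\y)|\le \sum_{k=1}^{d}|F(\mathbf{z}^{(k-1)})-F(\mathbf{z}^{(k)})|\le \sum_{k=1}^{d}|x_k-y_k|,
$$
which is exactly \ref{LC-pt1}.

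The only delicate point, and thus the only place where a hypothesis on $\calD$ is actually used, is that each intermediate point $\mathbf{z}^{(k)}$ must lie in $\calD$ so that \ref{LC-pt2} is applicable to it; this is where the product structure $\calD=\prod_{i=1}^{d}\delta_i$ is essential. Since the $j$-th coordinate of any $\mathbf{z}^{(k)}$ equals either $x_j$ or $y_j$, both of which belong to $\delta_j$, we get $\mathbf{z}^{(k)}\in\calD$ for every $k$. Without the product hypothesis this would fail in general, so I would make a point of flagging it explicitly. There is no real obstacle beyond this observation; the argument is a one-line telescoping estimate once the chain is set up.
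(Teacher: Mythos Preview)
Your proposal is correct and matches the paper's proof essentially line for line: the paper also dismisses \ref{LC-pt1} $\Rightarrow$ \ref{LC-pt2} as trivial and proves the converse via the same telescoping chain $(y_1,\ldots,y_k,x_{k+1},\ldots,x_d)$ followed by the triangle inequality and the coordinatewise Lipschitz bound. Your explicit remark that the product structure $\calD=\prod_i\delta_i$ is what guarantees the intermediate points lie in $\calD$ is a nice addition that the paper leaves implicit.
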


\begin{proof}
    The nontrivial implication is 
    $\eqref{LC-pt2}\Rightarrow\eqref{LC-pt1}$.
    Let 
        $
        \x=(x_1,\ldots,x_d)\in \cD$
    and
        $
        \y=(y_1,\ldots,y_d)\in \cD.
        $
    We have that 
     \begin{align*}
            |F(\x) - F(\y)| 
            &= 
            \big|F(x_1, x_2, \ldots x_d) - F(y_1, x_2, \ldots, x_d) +\\
            &\hspace{1cm}F(y_1, x_2, \ldots, x_d) - F(y_1, y_2, x_3 \ldots, x_d) +\ldots+\\
            &\hspace{1cm}
            F(y_1, y_2, \ldots, y_{d-1},x_d)
            - F(y_1, y_2, \ldots, y_d)\big|\\
            &\leq
            \big|F(x_1, x_2, \ldots x_d) - F(y_1, x_2, \ldots, x_d)\big| +\\
            &\hspace{1cm}
            \big|F(y_1, x_2, \ldots, x_d) - F(y_1, y_2, x_3 \ldots, x_d)\big| +\ldots+\\
            &\hspace{1cm}
            \big|F(y_1, y_2, \ldots, y_{d-1},x_d)
            - F(y_1, y_2, \ldots, y_d)\big|\\
            &\le 
                |x_1 - y_1| + 
                |x_2 - y_2| + 
                \ldots +
                |x_d - y_d|,
    \end{align*}
    where we used the triangular inequality in the first inequality and the Lipschitz condition in each variable in the second inequality.
\end{proof}

\begin{lemma}\label{lem:lipschitz-piecewise}
    Let $x_1\le x_2\le \ldots \le x_n$ be points in $\RR$ and $f:[x_1,x_n]\to \RR$ a function. 
    The following statements are equivalent:
\begin{enumerate}
    \item\label{LC-piecewise-pt1} 
        $f$ satisfies the Lipschitz condition on $[x_1,x_n]$.
    \item\label{LC-piecewise-pt2}
        $f$ satisfies the Lipschitz condition on $[x_i,x_{i+1}]$ for $i=1,\ldots,n-1$.
\end{enumerate} 
\end{lemma}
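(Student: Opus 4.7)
The plan is to prove the two implications separately, with the forward implication being trivial and the reverse implication requiring a telescoping argument on the partition.

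For $\eqref{LC-piecewise-pt1}\Rightarrow\eqref{LC-piecewise-pt2}$, I would simply note that each $[x_i,x_{i+1}]$ is a subset of $[x_1,x_n]$, so the global Lipschitz condition automatically restricts to a Lipschitz condition on each subinterval with the same constant.

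For the nontrivial direction $\eqref{LC-piecewise-pt2}\Rightarrow\eqref{LC-piecewise-pt1}$, I would pick arbitrary $x,y\in[x_1,x_n]$ and, without loss of generality, assume $x\le y$. Then I would locate indices $i\le j$ in $\{1,\dots,n-1\}$ such that $x\in[x_i,x_{i+1}]$ and $y\in[x_j,x_{j+1}]$. If $i=j$, the bound $|f(x)-f(y)|\le |x-y|$ follows directly from the piecewise hypothesis. If $i<j$, I would write the telescoping identity
\begin{equation*}
    f(y)-f(x)=\bigl(f(x_{i+1})-f(x)\bigr)+\sum_{k=i+1}^{j-1}\bigl(f(x_{k+1})-f(x_k)\bigr)+\bigl(f(y)-f(x_j)\bigr),
\end{equation*}
apply the triangle inequality, and then bound each difference using the Lipschitz condition on the appropriate subinterval (noting that $x,x_{i+1}\in[x_i,x_{i+1}]$, that $x_k,x_{k+1}\in[x_k,x_{k+1}]$, and that $x_j,y\in[x_j,x_{j+1}]$). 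The resulting sum of lengths is
\begin{equation*}
    (x_{i+1}-x)+\sum_{k=i+1}^{j-1}(x_{k+1}-x_k)+(y-x_j)=y-x=|y-x|,
\end{equation*}
which is exactly the desired bound.

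No step here is a real obstacle; the only mild care needed is the bookkeeping of boundary cases (for instance when $x=x_{i+1}$ or $y=x_j$, or when $j=i+1$ so the middle telescoping sum is empty), but these are absorbed automatically by the telescoping formula. I would present the argument as a single clean computation preceded by the choice of $i,j$.
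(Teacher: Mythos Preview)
Your proposal is correct and follows essentially the same approach as the paper: locate the subintervals containing $x$ and $y$, telescope $f(y)-f(x)$ through the intermediate partition points, apply the triangle inequality, and use the piecewise Lipschitz bound on each piece so that the lengths sum to $y-x$. Your version is in fact slightly more careful than the paper's, since you explicitly treat the case $i=j$ and note that the middle sum may be empty.
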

\begin{proof}
    The nontrivial implication is 
    $\eqref{LC-piecewise-pt2}\Rightarrow\eqref{LC-piecewise-pt1}$.
    Let $x<y$ with $x,y\in [x_1,x_n]$.
    Let $k,\ell$ be integers such that
    $2\leq k<\ell\leq n$.
    Suppose that 
        $x\in [x_{k-1},x_k]$ 
    and 
        $y\in [x_{\ell},x_{\ell+1}]$.
    We have that 
    \begin{align*}
            |f(y) - f(x)| 
            &= 
            \big|f(x_k) - f(x) +
                f(x_{k+1}) - f(x_{k}) +\ldots+\\
            &\hspace{1cm}
            f(x_{\ell})-f(x_{\ell-1})
                +f(y)-f(x_{\ell})\big|\\
            &\leq
            \big|f(x_k) - f(x)\big| +
                \big|f(x_{k+1}) - f(x_{k})\big| +\ldots+\\
            &\hspace{1cm}
            \big|f(x_{\ell})-f(x_{\ell-1})\big|
                +\big|f(y)-f(x_{\ell})\big|\\
            &\le 
                (x_{k} - x) + 
                (x_{k+1} - x_{k}) + 
                \ldots +
                (x_{\ell} - x_{\ell-1})+
                (y-x_{\ell})\\
            &=y-x,
    \end{align*}
    where we used the triangular inequality in the first inequality and the piecewise Lipschitz condition
    from \eqref{LC-piecewise-pt2} in the second inequality.
\end{proof}

Recall that a multivariate function $F:\prod_{i=1}^d[a_i, b_i]\to \RR$,
where
$\prod_{i=1}^d[a_i, b_i]\subseteq [0,1]^d$, is \emph{multilinear} (resp.\ \emph{piecewise multilinear}), if it is linear (resp.\ piecewise linear)
separately in each variable, i.e., every restriction of $F$ to a single variable by fixing values of all other variables is linear (resp.\ piecewise linear).

The next proposition states that for a multilinear function on a $d$-box the Lipschitz condition is equivalent to the Lipschitz condition of its restriction to the vertices of the box.

\begin{proposition}\label{prop:ml-lipschitz}
    Let $F$
    be a multilinear function on a box $\cB=\prod_{i=1}^d[a_i, b_i]$, where $0\leq a_i<b_i\leq 1$ for each $i$. 
    The following statements are equivalent:
\begin{enumerate}
    \item\label{prop:ml-lipschitz-pt1} 
        $F$ satisfies the Lipschitz condition.
    \item\label{prop:ml-lipschitz-pt2} 
        $F$ satisfies the Lipschitz condition on the vertices, i.e., 
    \begin{equation*}
        |F(u_1, u_2,\ldots, u_d) - F(v_1, v_2, \ldots, v_d)| \le \sum_{i=1}^d |u_i - v_i|
    \end{equation*}
    for every choice of $u_i,v_i\in \{a_i,b_i\}$.
\end{enumerate} 
\end{proposition}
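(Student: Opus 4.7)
The direction \eqref{prop:ml-lipschitz-pt1} $\Rightarrow$ \eqref{prop:ml-lipschitz-pt2} is immediate since the inequality in \eqref{prop:ml-lipschitz-pt2} is a special case of the Lipschitz condition. The plan is therefore to establish the converse \eqref{prop:ml-lipschitz-pt2} $\Rightarrow$ \eqref{prop:ml-lipschitz-pt1}. The strategy is to apply Lemma \ref{lem:lipschitz-one-dim}, which reduces the multivariate Lipschitz condition to the one-variable Lipschitz condition. Concretely, I will fix an index $\ell$ and arbitrary values $u_i\in[a_i,b_i]$ for $i\ne\ell$, and show that the univariate function
\begin{equation*}
    t\mapsto F(u_1,\ldots,u_{\ell-1},t,u_{\ell+1},\ldots,u_d)
\end{equation*}
is Lipschitz with constant $1$ on $[a_\ell,b_\ell]$.

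Because $F$ is multilinear, this restriction is an affine function of $t$, so its Lipschitz constant equals the absolute value of its slope,
\begin{equation*}
    \alpha(u_1,\ldots,\widehat{u}_\ell,\ldots,u_d)
    :=\frac{F(u_1,\ldots,b_\ell,\ldots,u_d)-F(u_1,\ldots,a_\ell,\ldots,u_d)}{b_\ell-a_\ell}.
\end{equation*}
The next step is to show that $|\alpha|\le 1$ on the whole box $\prod_{i\ne\ell}[a_i,b_i]$. The numerator
\begin{equation*}
    h(u_1,\ldots,\widehat{u}_\ell,\ldots,u_d):=F(u_1,\ldots,b_\ell,\ldots,u_d)-F(u_1,\ldots,a_\ell,\ldots,u_d)
\end{equation*}
is a difference of two multilinear functions in the remaining $d-1$ variables and is therefore itself multilinear. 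A standard induction on the number of variables (using that an affine function on an interval attains its extrema at the endpoints) shows that a multilinear function on a box attains both its maximum and its minimum at vertices of that box; hence so does $|h|$.

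At every vertex $(v_1,\ldots,\widehat{v}_\ell,\ldots,v_d)\in\prod_{i\ne\ell}\{a_i,b_i\}$, the hypothesis \eqref{prop:ml-lipschitz-pt2} applied to the two corresponding vertices of $\cB$ differing only in the $\ell$-th coordinate gives
\begin{equation*}
    |h(v_1,\ldots,\widehat{v}_\ell,\ldots,v_d)|\le b_\ell-a_\ell,
\end{equation*}
so $|h|\le b_\ell-a_\ell$ everywhere on the box, i.e.\ $|\alpha|\le 1$. This yields the one-variable Lipschitz condition in the $\ell$-th variable, and applying Lemma \ref{lem:lipschitz-one-dim} with $\calD=\cB$ completes the proof. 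The only nontrivial ingredient is the vertex-extremality of multilinear functions; it is classical and easy, but worth stating explicitly since it is what converts a finite condition at $2^d$ points into a global one.
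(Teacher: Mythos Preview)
Your argument is correct and takes a genuinely different route from the paper's. The paper proceeds by induction on the dimension $\ell$ of faces of $\cB$: it first checks the Lipschitz condition on all edges (the base case), and in the inductive step it uses linearity in one variable to write the value of $F$ on an $\ell_0$-dimensional face as a convex combination of its values on two $(\ell_0-1)$-dimensional faces, to which the induction hypothesis applies. Only at the very end (when $\ell=d$) does it invoke Lemma~\ref{lem:lipschitz-one-dim}.

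You instead invoke Lemma~\ref{lem:lipschitz-one-dim} at the outset, reducing immediately to bounding the slope in a single coordinate. The key observation you then use---that a multilinear function on a box attains its extrema at the vertices---packages the induction into a single clean lemma, applied once to the $(d-1)$-variable function $h$. This is more conceptual and arguably shorter; the paper's approach, by contrast, keeps everything explicit and avoids stating vertex-extremality as a separate principle. Both arguments ultimately rest on the same affine/convex-combination idea, just organized differently.
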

\begin{proof}
    The nontrivial implication is 
    $\eqref{prop:ml-lipschitz-pt2}\Rightarrow\eqref{prop:ml-lipschitz-pt1}$.
    We will prove by induction on $\ell$ that $F$ satisfies the Lipschitz condition
    on each $\ell$-dimensional face of $\prod_{i=1}^d[a_i,b_i]$ for $\ell=1,\ldots,d$.
    For $\ell=d$ we get the whole box $\cB$.
    Note that all $\ell$-dimensional faces are determined
    in the following way:
    \begin{enumerate}
        \item 
            Choose a set $\cI_\ell= \{i_1,i_2,\ldots,i_\ell\}$
            of $\ell$ coordinates 
            $1\leq i_1<i_2<\ldots<i_\ell\leq d$.
        \item 
            Let $\cI_\ell^c=\{j_1,\ldots,j_{d-\ell}\}$ be the remaining
            coordinates among the coordinates $1,\ldots,d$.
        \item 
            Fix 
            $c_{j}\in \{a_{j},b_{j}\}$
            for each $j\in \cI_\ell^c$.
        \item 
            The choices above determine a $\ell$-dimensional face
            $$
            \cF_{\cI_\ell,(c_{j_1},\ldots,c_{j_{d-\ell}})}:=\prod_{i=1}^d \delta_i,
            $$
            where 
            $$\delta_i=
            \left\{
            \begin{array}{rl}
                [a_i,b_i],&\text{if }i\in \cI_\ell,\\
                \{c_i\},&\text{if }i\in \cI_\ell^c.
            \end{array}
            \right.
            $$
    \end{enumerate}

    To prove the base of induction let $\ell=1$ and 
    $\cI_1$ be an arbitrary one-dimensional face of 
    $\prod_{i=1}^d [a_i,b_i]$, i.e., an edge of $\prod_{i=1}^d [a_i,b_i]$. By symmetry we may assume 
    without the loss of generality that $\cI_1=\{1\}$.
    So we need to prove that $F$ satisfies the Lipschitz condition on each edge
    $$
    \cF_{\{1\},\{c_2,\ldots,c_d\}}=
    \big\{
    (x,c_2, \ldots,c_d)
    \colon
    x\in [a_1, b_1]
    \big\},$$
    where 
    $c_i\in \{a_i, b_i\}$ for each $i$.
    Let $x,y\in [a_1,b_1]$ with $x\neq y$. 
    Then
    \begin{align*}
        &\big|
        F(x, c_2,\ldots c_d) - F(y,c_2, \ldots, c_{d})\big| = \\
        =&\left|
        \frac{
            F(b_1, c_2,\ldots,c_{d}) 
            - 
            F(a_1, c_2, \ldots,c_{d})}{b_1 - a_1}\right||x - y|\\
        \le& \frac{b_1-a_1}{b_1-a_1}|x-y|=|x-y|,   
    \end{align*}
    where in the first equality we used
    linearity of $F$ in the first variable and 
    in the inequality we used the Lipschitz condition 
    for a pair of vertices 
    $(b_1,c_2, \ldots,c_{d})$
    and 
    $(a_1,c_2, \ldots,c_{d})$.
    This proves the base of induction.

    Let us now assume $F$ is Lipschitz on each of the 
    $\ell$-dimensional faces of $\prod_{i=1}^d[a_i,b_i]$
    for $1\leq\ell\leq \ell_0-1$ where $1\leq \ell_0-1< d$,
    and prove $F$ is Lipschitz on all $\ell_0$-dimensional faces.
    By symmetry we may assume without the loss of generality that 
    $\cI_{\ell_0}=\{1,\ldots,\ell_0\}$.
    So we need to prove that $F$ satisfies the Lipschitz condition on each face of the form
    $$
    \cF\equiv
    \cF_{\cI_{\ell_0},\{c_{\ell_0+1},\ldots,c_d\}}=
    \big\{
    (x_1,\ldots,x_{\ell_0},c_{\ell_0+1}, \ldots,c_d)
    \colon
    x_i\in [a_i, b_i]\text{ for each }i
    \big\},$$
    where 
    $c_j\in \{a_j, b_j\}$ for each $j$.
    Using Lemma \ref{lem:lipschitz-one-dim}
    with $\cD=\cF$
    it suffices to prove the Lipschitz condition in each variable $i\in \cI_{\ell_0}$. By symmetry we may assume without the loss of generality that $i=1$.
    Let 
    \begin{align*}(x,z_2,\ldots,z_{\ell_0},c_{\ell_0+1},\ldots,c_d)
        &\in\cF,\\
    (y,z_2,\ldots,z_{\ell_0},c_{\ell_0+1},\ldots,c_d)
        &\in\cF.
    \end{align*}
    We have that each $z_{j}$ is of the form 
    $z_{j}=t_ja_{j}+(1-t_j)b_{j}$ for some $t_j\in [0,1]$.
    Then by linearity of $F$ in the $\ell_0$-th variable we have that
    \begin{align}
    \label{linearity}
    \begin{split}
    &F(z_1,z_2,\ldots,z_{\ell_0},c_{\ell_0+1},\ldots,c_d)
    =\\
    =&
    t_{\ell_0} F(z_1,z_2,\ldots,a_{\ell_0},c_{\ell_0+1},\ldots,c_d)
    +\\
    &\hspace{1cm}+
    (1-t_{\ell_0})   F(z_1,z_2,\ldots,b_{\ell_0},c_{\ell_0+1},\ldots,c_d).
    \end{split}
    \end{align}
    for each $z_1\in [a_1,b_1]$.
    Hence,
    \begin{align*}
    &\Big|F(x,z_2,\ldots,z_{\ell_0},c_{\ell_0+1},\ldots,c_d)-
    F(y,z_2,\ldots,z_{\ell_0},c_{\ell_0+1},\ldots,c_d)
    \Big|=\\
    =&
    \Big|t_{\ell_0} \big(F(x,z_2,\ldots,a_{\ell_0},c_{\ell_0+1},\ldots,c_d)
    -F(y,z_2,\ldots,a_{\ell_0},c_{\ell_0+1},\ldots,c_d)\big)+\\
    &+(1-t_{\ell_0}) \big(F(x,z_2,\ldots,b_{\ell_0},c_{\ell_0+1},\ldots,c_d)
    -F(y,z_2,\ldots,b_{\ell_0},c_{\ell_0+1},\ldots,c_d)\big)\Big|
    \leq
    \\
    \leq&
    t_{\ell_0}\Big|F(x,z_2,\ldots,a_{\ell_0},c_{\ell_0+1},\ldots,c_d)
    -F(y,z_2,\ldots,a_{\ell_0},c_{\ell_0+1},\ldots,c_d)\Big|\\
    &+(1-t_{\ell_0}) \Big|F(x,z_2,\ldots,b_{\ell_0},c_{\ell_0+1},\ldots,c_d)
    -F(y,z_2,\ldots,b_{\ell_0},c_{\ell_0+1},\ldots,c_d)\Big|\\
    \leq&t_{\ell_0}|x-y|+(1-t_{\ell_0})|x-y|=|x-y|,
    \end{align*}
    where we used \eqref{linearity} in the first equality,
    the triangle inequality in the first inequality
    and the Lipschitz condition for the $(\ell_0-1)$-dimensional faces
    $$\cF_{
    \{1,\ldots,\ell_{0}-1\},
    (a_{\ell_0},c_{\ell_0+1},\ldots,c_{d})
    }\quad\text{
    and}\quad
    \cF_{
    \{1,\ldots,\ell_{0}-1\},
    (b_{\ell_0},c_{\ell_0+1},\ldots,c_{d})
    },$$
    respectively.
    This proves the induction step and concludes the proof of the proposition.
\end{proof}

The next proposition states that for a multilinear function on a $d$-box the monotonicity condition is equivalent to the monotonicity condition of its restriction to the vertices of the box. The proof easily follows using multilinearity of $Q$, but we include it for the sake of completeness.

\begin{proposition}\label{prop:ml-monotonocity}
    Let $Q$
    be a multilinear function on a box $\cB=\prod_{i=1}^d[a_i, b_i]$, where $0\leq a_i<b_i\leq 1$ for each $i$. 
    The following statements are equivalent:
\begin{enumerate}
    \item\label{prop:ml-monotonocity-pt1} 
        $Q$ satisfies the monotonocity condition in each variable.
    \item\label{prop:ml-monotonocity-pt2} 
        $Q$ satisfies the monotonocity condition on the vertices, i.e., 
    \begin{equation*}
        Q(u_1, u_2,\ldots, u_d) \leq Q(v_1, v_2, \ldots, v_d),
    \end{equation*}
        where $u_i, v_i\in \{a_i, b_i\}$ and $u_i=v_i$ for all but at most one index $j$ where $u_{j}\leq v_j$.
\end{enumerate} 
\end{proposition}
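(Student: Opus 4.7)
The direction $\eqref{prop:ml-monotonocity-pt1}\Rightarrow\eqref{prop:ml-monotonocity-pt2}$ is immediate, as every vertex of $\cB$ lies in $\cB$. For the converse, my plan is to exploit multilinearity to reduce the statement at a general point to the statement at vertices. By the symmetry of the coordinates, it suffices to prove monotonicity of $Q$ in the first variable. Fix $(z_2,\ldots,z_d)\in\prod_{i=2}^d[a_i,b_i]$. Linearity of $Q$ in the first argument gives
\begin{equation*}
Q(x,z_2,\ldots,z_d)=\frac{b_1-x}{b_1-a_1}\,Q(a_1,z_2,\ldots,z_d)+\frac{x-a_1}{b_1-a_1}\,Q(b_1,z_2,\ldots,z_d),
\end{equation*}
which is an affine function of $x\in[a_1,b_1]$ and is nondecreasing in $x$ if and only if $Q(a_1,z_2,\ldots,z_d)\le Q(b_1,z_2,\ldots,z_d)$. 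Thus the whole problem reduces to establishing this single inequality for every choice of $(z_2,\ldots,z_d)$ in the remaining box.

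Next I would consider the auxiliary function $g(z_2,\ldots,z_d):=Q(b_1,z_2,\ldots,z_d)-Q(a_1,z_2,\ldots,z_d)$, which inherits multilinearity on $\prod_{i=2}^d[a_i,b_i]$. Iterating the one-dimensional convex combination identity displayed above, coordinate by coordinate, yields the standard multilinear expansion
\begin{equation*}
g(z_2,\ldots,z_d)=\sum_{S\subseteq\{2,\ldots,d\}}\Big(\prod_{i\in S}\frac{z_i-a_i}{b_i-a_i}\Big)\Big(\prod_{i\notin S}\frac{b_i-z_i}{b_i-a_i}\Big)\,g(c^{(S)}_2,\ldots,c^{(S)}_d),
\end{equation*}
where $c^{(S)}_i=b_i$ for $i\in S$ and $c^{(S)}_i=a_i$ otherwise. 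All the coefficients belong to $[0,1]$, and hypothesis \eqref{prop:ml-monotonocity-pt2} applied to the vertex pairs $(a_1,c^{(S)}_2,\ldots,c^{(S)}_d)$ and $(b_1,c^{(S)}_2,\ldots,c^{(S)}_d)$ yields $g(c^{(S)}_2,\ldots,c^{(S)}_d)\ge 0$. Hence $g\ge 0$ on the whole box, which is exactly what was needed.

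There is no serious obstacle; the only technical point worth spelling out is the multilinear expansion of $g$. I would justify it either by a short induction on the number of variables, applying the two-term convex combination in one coordinate per step, or by the uniqueness principle that two multilinear functions on a box agreeing at all $2^{d-1}$ vertices must coincide.
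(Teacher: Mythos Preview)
Your proof is correct and follows essentially the same approach as the paper: reduce by symmetry to the first variable, use linearity in that variable to reduce to the inequality $Q(a_1,z_2,\ldots,z_d)\le Q(b_1,z_2,\ldots,z_d)$, and then expand multilinearly over the remaining coordinates so that the inequality becomes a nonnegative combination of the vertex inequalities from \eqref{prop:ml-monotonocity-pt2}. Your presentation via the auxiliary function $g$ is slightly cleaner than the paper's direct expansion, but the underlying idea is identical.
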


\begin{proof}
    The nontrivial implication is 
    $\eqref{prop:ml-monotonocity-pt2}\Rightarrow\eqref{prop:ml-monotonocity-pt1}$.
    By symmetry it suffices to prove monotonicity in the first variable.
    Let $\x=(x,z_2,\ldots,z_d)$ and $\y=(y,z_2,\ldots,z_d)$ with 
    $a_1\leq x<y\leq b_1$ and
    $z_j\in [a_j,b_j]$ for each $j$.
    We have to prove that 
    \begin{equation}
        \label{mon-to-prove}
        Q(a_1,z_2,\ldots,z_d)\leq Q(b_1,z_2,\ldots,z_d).
    \end{equation}
    We have that 
        $x=t_xa_1+(1-t_x)b_1$, 
        $y=t_ya_1+(1-t_y)b_1$
    for some $0\leq t_y< t_x\leq 1$
    Each $z_j$ is of the form
        $z_j=t_ja_j+(1-t_j)b_j$ for some $t_j\in [0,1]$.
    Then
    \begin{align}
    \label{ml-monotonicity}
    \begin{split}
    &Q(z_1,z_2,\ldots,z_d)
    =\\
    =&
    \sum_{(r_1,\ldots,r_{d})\in [1]^{d}}
    t_1^{r_1}(1-t_1)^{1-r_1}
    \cdots 
    t_{d}^{r_{d}}
        (1-t_{d})^{1-r_{d}}\cdot\\
    & \hspace{1cm}
        \cdot Q(r_1a_1+(1-r_1)b_1,r_2a_2+(1-r_2)b_2
            \ldots,r_da_d+(1-r_d)b_d)
    \end{split}
    \end{align}
    Using \eqref{ml-monotonicity} for $z_1=a_1$ and $z_1=b_1$,
    it follows that \eqref{mon-to-prove} is equivalent to
    \begin{align}
    \label{ml-monotonicity-v2}
    \begin{split}
    &
    \sum_{(r_2,\ldots,r_{d})\in [1]^{d-1}}
    (t_y-t_x)
    t_{2}^{r_{2}}
        (1-t_{2})^{1-r_{2}}
    \cdots 
    t_{d}^{r_{d}}
        (1-t_{d})^{1-r_{d}}\cdot\\
    & \hspace{1cm}
        \cdot Q(b_1,r_2a_2+(1-r_2)b_2
            \ldots,r_da_d+(1-r_d)b_d)\\
    &\leq\sum_{(r_2,\ldots,r_{d})\in [1]^{d-1}}
    (t_y-t_x)
    t_{2}^{r_{2}}
        (1-t_{2})^{1-r_{2}}
    \cdots 
    t_{d}^{r_{d}}
        (1-t_{d})^{1-r_{d}}\cdot\\
    & \hspace{1cm}
        \cdot Q(a_1,r_2a_2+(1-r_2)b_2
            \ldots,r_da_d+(1-r_d)b_d)
    \end{split}
    \end{align}
    Since for each tuple $(r_2,\ldots,r_{d})\in [1]^{d-1}$
    we have 
    \begin{align*} 
        &Q(a_1,r_2a_2+(1-r_2)b_2,
            \ldots,r_da_d+(1-r_d)b_d)\\
        \leq&
        Q(b_1,r_2a_2+(1-r_2)b_2,
            \ldots,r_da_d+(1-r_d)b_d)
    \end{align*}
    by assumption \eqref{prop:ml-monotonocity-pt2} and since $t_y-t_x<0$,
    it follows that \eqref{ml-monotonicity-v2} holds.
    This proves the proposition.
\end{proof}

The next proposition is a first step toward establishing Theorems \ref{main} and \ref{main-v2} by defining $Q$ on $\calD$
and all $(d-1)$--dimensional faces 
\begin{equation}
\label{d-1-faces}
\calL_i=
\{
(x_1,\ldots,x_{i-1},0,x_{i+1},\ldots,x_d)
\colon x_i\in [0,1]
\}
\end{equation}
of $[0,1]^d$ containing $(0,\ldots,0)$. 

\begin{proposition}
\label{prop:extension}
Let one of the following assumptions hold:
\begin{enumerate}
    \item 
    \label{ass-1}
    $s=1$, 
    real numbers $a_{i}, $ $i=1,\ldots,d,$ 
    points $x_\mi\in \prod_{i=1}^d\{a_i,1\}=:\calD$ defined by \eqref{def:xI-v4}
    and real numbers 
    $q_\mi$ for all $\mi\in \{0,1\}^d$,
    satisfy conditions \eqref{th:main-cond-2} of Theorem \ref{main}.
    \item 
    \label{ass-2}
    $s=2$, 
    real numbers $a_{i}, b_i$, $i=1,\ldots,d,$
    points $x_\mi\in \prod_{i=1}^d\{a_i,b_i,1\}=:\calD$ defined by \eqref{def:xI-v3}
    and real numbers 
    $q_\mi$ for all $\mi\in \{0,1,2\}^d$,
    satisfy conditions \eqref{th:main-v2-cond-1} of Theorem \ref{main-v2}.
\end{enumerate}
Let
\begin{equation} 
\label{extended-domain}
\cD^{(\ext)}
    =
    \cD 
    \cup 
    \Big(\cup_{i=1}^d \calL_i\Big) 
\end{equation}
where
$\calL_i$ are as in \eqref{d-1-faces}.
Let 
$$
    Q:\calD^{(\ext)}\to\RR
$$ 
be defined by
\begin{equation}
    \label{def-Q}
    Q(x_1,\ldots,x_d)=\left\{
    \begin{array}{rl}
    q_\mi,& \text{if }(x_1,\ldots,x_d)=x_\mi \text{ for some }
        \mi\in [s]^d,\\
    0,& \text{if }(x_1,\ldots,x_d)\in \calL_i \text{ for some }i\in \{1,\ldots,d\},
    \end{array}
    \right.
\end{equation}
Then $Q$ is well-defined and satisfies 
\ref{BC}, \ref{Mon} and \ref{LC}.
\end{proposition}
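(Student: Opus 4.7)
The plan is to verify well-definedness of $Q$ together with the three conditions \ref{BC}, \ref{Mon} and \ref{LC} by a direct case analysis on which stratum of $\calD^{(\ext)}=\calD\cup\bigcup_{i=1}^d\calL_i$ the points in question belong to. Since $\calD$ is finite and each $\calL_i$ is a coordinate face, every check reduces to a handful of subcases, most of them trivial because $Q$ vanishes identically on each $\calL_i$.

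I would first dispose of well-definedness and \ref{BC}. The only possible conflict in \eqref{def-Q} occurs at points $x_\mi\in\calD$ for which some coordinate $(x_\mi)_k$ equals $0$, i.e.\ when $a_k=0$ and $\mi_k=0$; in that case $H_d(x_\mi)=0$, so the upper bound $q_\mi\le H_d(x_\mi)$ from \eqref{th:main-cond-2} (resp.\ \eqref{th:main-v2-cond-1}) together with $q_\mi\ge 0$ forces $q_\mi=0$, in agreement with the value prescribed on $\calL_k$. Condition \ref{BC}(a) is then immediate from the definition of $Q$, since a point of $\calD^{(\ext)}$ with a zero coordinate lies in some $\calL_i$. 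For \ref{BC}(b) one checks that at each vertex of $\calD$ of the form $x_\mi=(1,\dots,1,u_i,1,\dots,1)$, the bounds $G_d(x_\mi)$ and $H_d(x_\mi)$ collapse to the common value $u_i$, pinning $q_\mi=u_i$.

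Next I would verify \ref{Mon} for pairs $\uu,\widetilde\uu\in\calD^{(\ext)}$ differing only in the $i$-th coordinate, with $\uu_i\le\widetilde\uu_i$. If both points lie on some $\calL_\ell$ then $Q(\uu)=Q(\widetilde\uu)=0$. If $\uu\in\calL_\ell$ and $\widetilde\uu=x_\mj\in\calD$, then either $\ell=i$, giving $Q(\uu)=0\le q_\mj$, or $\ell\ne i$, in which case $(x_\mj)_\ell=0$ forces $q_\mj=0$ by the well-definedness argument above. If both points lie in $\calD$, the first line of \eqref{th:main-cond-2} (resp.\ the second and third lines of \eqref{th:main-v2-cond-1}) gives the inequality for adjacent grid points, and for non-adjacent pairs — which arise only in the setting of Theorem \ref{main-v2}, and only with a single intermediate point $b_i$ — one chains the two adjacency bounds.

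The main obstacle is \ref{LC}, because $\calD^{(\ext)}$ is not a product set and two of its points can differ in arbitrarily many coordinates, so Lemma \ref{lem:lipschitz-one-dim} does not apply directly to the whole domain. The subcases where both points lie in the same $\calL_\ell$, or in two distinct faces $\calL_\ell$ and $\calL_m$, are trivial since $Q$ vanishes at both. The delicate subcase is a pair $\uu\in\calL_\ell$, $\widetilde\uu=x_\mj\in\calD$: here $Q(\uu)=0$ while
\begin{equation*}
\sum_{k=1}^d|u_k-(x_\mj)_k|\;\ge\;|u_\ell-(x_\mj)_\ell|\;=\;(x_\mj)_\ell\;\ge\;H_d(x_\mj)\;\ge\;q_\mj,
\end{equation*}
which is exactly the Lipschitz inequality. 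Finally, when both $\uu=x_\mi$ and $\widetilde\uu=x_\mj$ lie in $\calD$, I would connect them by a path of adjacent grid moves — at most one step per differing coordinate in the setting of Theorem \ref{main}, at most two in Theorem \ref{main-v2} — and combine the per-step bounds from \eqref{th:main-cond-2} or \eqref{th:main-v2-cond-1} via the triangle inequality, using that the sum of the step sizes along the path is exactly $\sum_{k=1}^d|(x_\mi)_k-(x_\mj)_k|$.
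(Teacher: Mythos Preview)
Your proposal is correct and follows the same case-by-case structure as the paper. The paper handles well-definedness exactly as you do, disposes of \ref{BC} in one line (without spelling out \ref{BC}(b), which you check via $G_d(x_\mi)=H_d(x_\mi)=u_i$), and for \ref{Mon} and \ref{LC} it invokes Lemma~\ref{lem:lipschitz-one-dim} to reduce to pairs differing in a single coordinate, then runs a three-case analysis essentially identical to your \ref{Mon} argument.

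Your treatment of \ref{LC} is the one place where you are more careful than the paper. You correctly note that $\calD^{(\ext)}$ is not a product set, so Lemma~\ref{lem:lipschitz-one-dim} does not literally license the reduction to single-coordinate pairs on the whole domain; you bypass this by supplying the direct bound
\[
\sum_{k=1}^d|u_k-(x_\mj)_k|\;\ge\;(x_\mj)_\ell\;\ge\;H_d(x_\mj)\;\ge\;q_\mj
\]
for the mixed case $\uu\in\calL_\ell$, $\widetilde\uu=x_\mj\in\calD$, and by telescoping inside the genuine product $\calD$ for the grid--grid case. In the paper's downstream application (Proposition~\ref{prop:ml-lipschitz} applied to the vertices of the subboxes $\cB_\mi$ in the proof of Theorem~\ref{main}) only the finite product grid $\prod_j\{0,a_j,1\}\subset\calD^{(\ext)}$ (resp.\ $\prod_j\{0,a_j,b_j,1\}$) is actually needed, and on that grid the coordinate-wise reduction is legitimate; your argument, however, establishes \ref{LC} on all of $\calD^{(\ext)}$ as the proposition states.
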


\begin{figure}[h!]
        \centering
    
        \begin{tikzpicture}
        
        \pgfmathsetmacro{\one}{4}
        \pgfmathsetmacro{\aa}{2.5}
        \pgfmathsetmacro{\ab}{2.5}
        \pgfmathsetmacro{\ac}{2.5}
        \pgfmathsetmacro{\ba}{4}
        \pgfmathsetmacro{\bb}{4}
        \pgfmathsetmacro{\bc}{4}

        \usetikzlibrary{patterns}
        \draw[thick] (0, 0, 0) -- (\one, 0, 0) -- (\one, \one, 0) -- (0,\one, 0);
        \draw[thick] (0, 0, 0) -- (0, \one, 0) -- (0, \one, \one) -- (0, 0,\one);
        \draw[thick] (0, 0, 0) -- (0, 0, \one) -- (\one, 0, \one) -- (\one, 0,  0);
        \fill[gray,opacity=0.3] (0, 0, 0) -- (\one, 0, 0) -- (\one, \one, 0) -- (0,\one, 0);
        \fill[gray,opacity=0.3] (0, 0, 0) -- (0, \one, 0) -- (0, \one, \one) -- (0, 0,\one);
        \fill[gray,opacity=0.3] (0, 0, 0) -- (0, 0, \one) -- (\one, 0, \one) -- (\one, 0,  0);
        \node at (0.5, 3.5, 0) {$\calL_3$};
        \node at (2.5, 0.2, 2.5) {$\calL_2$};
        \node at (0.2, 2.5, 2.5) {$\calL_1$};
        
        \coordinate (A) at (\aa, \ab, \ac); 
        \coordinate (B) at (\ba, \ab, \ac); 
        \coordinate (C) at (\ba, \bb, \ac); 
        \coordinate (D) at (\aa, \bb, \ac); 
        \coordinate (E) at (\aa, \ab, \bc); 
        \coordinate (F) at (\ba, \ab, \bc); 
        \coordinate (G) at (\ba, \bb, \bc); 
        \coordinate (H) at (\aa, \bb, \bc); 
        
        \draw[thick] (A) -- (B) -- (C) -- (D) -- cycle;
        \draw[thick] (E) -- (F) -- (G) -- (H) -- cycle;
        \draw[thick] (A) -- (E);
        \draw[thick] (B) -- (F);
        \draw[thick] (C) -- (G);
        \draw[thick] (D) -- (H);
        
        \node at (\aa + 0.5, \ab + 0.2, \ac) {$q_{0,0,0}$};
        \node at (\ba + 0.5, \ab + 0.1, \ac) {$q_{1,0,0}$};
        \node at (\aa + 0.5, \bb + 0.2, \ac) {$q_{0,1,0}$};
        \node at (\ba + 0.5, \bb + 0.1, \ac) {$q_{1,1,0}$};
        \node at (\aa + 0.7, \ab + 0.2, \bc)  {$q_{0,0,1}$};
        \node at (\ba + 0.5, \ab + 0.0, \bc){$q_{1,0,1}$};
        \node at (\aa + 0.7, \bb + 0.2, \bc) {$q_{0,1,1}$};
        \node at (\ba +0.5, \bb, \bc) {$q_{1,1,1}$};
        
        \node at (\aa, -0.3, 0) {$a_1$};
        \draw (\aa, -0.1, 0) -- (\aa, 0.1, 0);
        \node at (\ba+0.5, -0.3, 0) {$b_1=1$};
        \draw (\ba, -0.1, 0) -- (\ba, 0.1, 0);
        \node at (-0.3, \ab, 0) {$a_2$};
        \draw (-0.1, \ab, 0) -- (0.1, \ab, 0);
        \node at (-0.7, \bb, 0) {$b_2=1$};
        \draw (-0.1, \bb, 0) -- (0.1, \bb, 0);
        \node at (-0.3, 0, \ac) {$a_3$};
        \draw (-0.1, 0, \ac) -- (0.1, 0, \ac);
        \node at (-0.7, 0, \bc) {$b_3=1$};
        \draw (-0.1, 0, \bc) -- (0.1, 0, \bc);
        
        \draw[->, thin] (0,0,0) -- (4.5,0,0);
        \draw[->, thin] (0,0,0) -- (0,4.5,0);
        \draw[->, thin] (0,0,0) -- (0,0,4.5);
        
        \end{tikzpicture}
    \caption{To construct a quasi-copula $Q$ on $[0,1]^3$ given the values $q_\mi$ at $x_\mi\in \calD:=\prod_{i=1}^3\{a_i,1\}$, we first define it to be 0 on all $2$-dimensional faces $\calL_1$, $\calL_2,$ $\calL_3$ and prove that the extension indeed
    meets the requirements of a quasi-copula. In the case $\calD:=\prod_{i=1}^3 \{a_i,b_i,1\}$
    the same applies, only that the initial grid consists of $3^3=27$ points, having values 
        $q_{\mi_1,\mi_2,\mi_3}$, $\mi_j\in \{0,1,2\}$.
    }
    \label{fig:facesL}
\end{figure}

\begin{proof}
First we prove well-definedness. 
We have to show that $x_\mi\in \calL_i$ implies that $q_\mi=0$. 
If $x_\mi\in \calL_i$, then $x_i=0$ and by \eqref{th:main-cond-2} we have that $0\leq q_\mi\leq H_d(x_\mi)=0$. Hence, $q_\mi=0$.

$Q$ satisfies \ref{BC} by definition of $Q$ on $\calL_i$ for each $i$.

By Lemma \ref{lem:lipschitz-one-dim}, it suffices to prove the Lipschitz condition separately for each variable.
In what follows we will prove monotonicity and the Lipschitz condition for each variable simultaneously. 
By symmetry it suffices to prove them for the first variable. Let us take $\x:=(x,a_2,\ldots,a_d)$ and $\y=(y,a_2,\ldots,a_d)$ with $0\leq x<y$ such that $\x,\y\in \calD^{(\ext)}$. We separate 3 cases:\\

\noindent \textbf{Case 1:} $\x,\y\in \cD$. 
Then $0\leq Q(\y)-Q(\x)\leq y-x$ by \eqref{th:main-cond-2} or 
\eqref{th:main-v2-cond-1}.\\

\noindent \textbf{Case 2:} $\x\in \cD$ and $\y\in\calL_i$
for some $i\in \{1,\ldots,d\}$. Then $i\neq 1$ (since $y>0$). But then $a_i=0$
and hence $\x\in \calL_i$ as well. It follows that
$Q(\x)=Q(\y)=0$.\\

\noindent \textbf{Case 3:} $\x\in \calL_i$ for some $i\in \{1,\ldots,d\}$. 

First note that $0=Q(\x)$. We separate two subcases according to the value of $i$.

Assume that $i=1$. If $\y\in \cD$, then $Q(\y)\leq y$ by \eqref{th:main-cond-2} or \eqref{th:main-v2-cond-1}. If $\y\in \calL_j$ for some $j$, then $Q(\y)=0$. In all cases \ref{Mon} and \ref{LC} are satisfied.

If $i>1$, then $\y\in\calL_i$ as well and $Q(\y)=0$.
\end{proof}

Now we are ready to prove Theorems \ref{main} and \ref{main-v2}.

\begin{proof}[Proof of Theorem \ref{main}]
Let $\calD$, $\calD^{(\ext)}$ and $Q$
be as in Proposition \ref{prop:extension} under the assumption \eqref{ass-1}.
We subdivide the box $[0,1]^d$
into $2^d$ smaller $d$-boxes 
\begin{equation}
\label{subboxes}
    \cB_{\mi}=\prod_{j=1}^d \delta_j(\mi)
\end{equation}
for $\mi=(\mi_1,\ldots,\mi_d)\in \{0,1\}^d$, where
    $$
    \delta_j(\mi)
    =
    \left\{
    \begin{array}{rl}
    [0,a_j],&   \text{if }\mi_j=0,\\
    \left[a_{j},1\right],& \text{if }\mi_j=1.
    \end{array}
    \right.
    $$
In particular,
\begin{align*}
    \calB_{(0,0\ldots 0)} &= \prod_{k=1}^d[0, a_k],
    \quad
    \calB_{(1, 0\ldots 0)}=[a_1, 1]\times \prod_{k=2}^d [0, a_k],\ldots,\quad
    \calB_{(1,1\ldots 1)}=\prod_{k=1}^d[a_k, 1].
\end{align*}

Note that the $Q$-volume $V_Q(\calB_\mi)$ of each box $\calB_\mi$ is determined by the value of $Q$
on the vertices of $\calB_\mi$.
For each $\mi\in \{0,1\}^d$ we define a constant function
    $$
    \rho_\mi:\cB_\mi\to \RR,\quad
    \rho_\mi:=\frac{V_Q(\cB_\mi)}{\prod_{j=1}^d \delta_j(\mi)}.
    $$
Let us define a piecewise constant function 
\begin{equation*}
\rho:[0,1]^d\to \RR,\quad
\rho(\x):=
\left\{
\begin{array}{rl}
\rho_\mi,&  \text{if }\x\in \Int(\cB_\mi) \text{ for some }\mi\in \{0,1\}^d,\\
0,& \text{otherwise},
\end{array}
\right.
\end{equation*}
where $\Int(A)$ stands for the topological interior of the set $A$ in the usual Euclidean topology.
We will prove that a function $Q:[0,1]^d\to \RR$,
defined by 
\begin{equation}
    Q(x_1, x_2, \ldots x_d) = \int_0^{x_1}\int_0^{x_2}\ldots \int_0^{x_d} 
    \rho(x_1, x_2, \ldots x_d) \dd x_1 \dd x_2\cdots \dd x_d,
    \label{eq:integral}  
\end{equation}
is a quasi-copula satisfying the statement of Theorem \ref{main}.
Note that in case $\rho$ is a density of a random vector, $Q$ is by definition its cumulative distribution function.

First observe that $Q$ is a piecewise multilinear function, 
which 
by construction 
extends $Q$ 
and hence $Q$ satisfies \ref{BC}.

Next we prove the Lipschitz condition.
Let $\cB_{\mi}$ be as in \eqref{subboxes}.
Since $Q$ coincides with $Q$ on $\calD^{(\ext)}$, 
the values of $Q$ at the vertices of any box $\cB_\mi$ 
coincide with the values of $Q$.
By Proposition \ref{prop:extension}, $Q$ satisfies the Lipschitz condition. Thus, Proposition \ref{prop:ml-lipschitz} implies that $Q$ satisfies the Lipschitz condition on each $B_\mi$. 
It remains to prove that $Q$ satisfies the Lipschitz condition on the whole box $[0, 1]^d$. 
By Lemma \ref{lem:lipschitz-one-dim}, it suffices to prove the Lipschitz condition for each variable.
By symmetry we may prove it only for the first variable.
Let us fix $x_2,\ldots,x_d\in [0,1]$ and let 
$f(x) := Q(x, x_2,\ldots, x_d)$, $x\in [0,1]$, be a function. By construction of $Q$, $f$ satisfies the Lipschitz condition on the intervals $[0, a_1]$, $[a_1, 1]$, since the Lipschitz condition is satisfied on the boxes $\calB_\mi$. By Lemma \ref{lem:lipschitz-piecewise}, it follows that $f$ satisfies the Lipschitz condition on the whole interval $[0, 1]$. This concludes the proof of the Lipschitz condition of $Q$.

It remains to prove the monotonicity of $Q$. Note that it is sufficient to prove that $Q$ is monotone on each box $\calB_\mi$.
Since on each of these boxes the function $Q$ is multilinear and for the vertices of the box the condition of monotonicity holds, 
monotonicity on the whole boxes follows by 
Proposition \ref{prop:ml-monotonocity}.
\end{proof}

\begin{proof}[Proof of Theorem \ref{main-v2}]
The proof is analogous to the proof of Theorem \ref{main}
only that the definitions of $\calD$, $\calD^{(\ext)}$ and $Q$
are replaced by the ones under the assumption \eqref{ass-2}
of Proposition \ref{prop:extension} and the box $[0,1]^d$
is subdivided 
into $3^d$ smaller $d$-boxes 
$\cB_{\mi}=\prod_{j=1}^d \delta_j(\mi)$ 
for $\mi=(\mi_1,\ldots,\mi_d)\in \{0,1,2\}^d$, where
    $$
    \delta_j(\mi)
    =
    \left\{
    \begin{array}{rl}
    [0,a_j],&   \text{if }\mi_j=0,\\
    \left[a_{j},b_j\right],& \text{if }\mi_j=1,\\
    \left[b_{j},1\right],& \text{if }\mi_j=2.
    \end{array}
    \right.
    $$
The construction of $Q$ and the arguments are then the same as in the proof of Theorem \ref{main}.
\end{proof}


\section{Solution to the problem in dimensions up to 17}
\label{sec:results}

Assume the notation as in Sections \ref{sec:intro} and \ref{preparatory}.
In this section we present solutions to \cite[Open Problem 5]{ARIASGARCIA20201} up to $d=18$ for the minimal 
    (see Table \ref{tab:table1} in Subsection \ref{subsec:minimal}) 
and up to $d=17$ for the maximal volume question
    (see Table \ref{tab:table2} in Subsection \ref{subsec:maximal}). 
The main technique to obtain our results is a  computer software approach
by solving the corresponding linear programs (\eqref{LP}, \eqref{LP-extension} for the minimal and \eqref{LP-max} for the maximal volume),
which are based on the main results of Section \ref{preparatory},
i.e., Theorems \ref{main} and \ref{main-v2}.
We conclude the section with a graphical presentation of solutions and concluding remarks (see Subsection \ref{subsec:concluding}).

\subsection{Minimal volume problem}
\label{subsec:minimal}
Let us define the following linear program:

\begin{align}
\label{LP}
\begin{split}
\min_{
\substack{
    a_1,\ldots,a_d,\\
    b_1,\ldots,b_d,\\
    q_{\mi} \text{ for }\mi\in \{0,1\}^d
}}
&\hspace{0.2cm} \sum_{\mi\in [1]^d} \sign(\mi)\cdot q_\mi,\\
\text{subject to }
&\hspace{0.2cm} 
    0\leq a_i< b_i\leq 1\quad i=1,\ldots,d,\\
&\hspace{0.2cm} 
0\le q_{\mj} - q_{\mi} \le b_\ell - a_\ell
\quad 
    \text{for all }\ell=1,\ldots,d
    \text{ and all }\mi\sim_\ell\mj,\\
&\hspace{0.2cm} 
    \max\{0,G_d(x_{\mi})\} 
    \le q_\mi \le 
    H_d(x_{\mi})
    \quad \text{for all }\mi\in \{0,1\}^d.
\end{split}
\end{align}

\begin{proposition}
    \label{relaxation}
    Let $\mi^{(1)},\ldots,\mi^{(2^d)}$ be some order of all multi-indices $\mi\in\{0,1\}^d$.
    If there exists an optimal solution 
    \begin{equation}
    \label{optimal-soln}
    (a_1^\ast,\ldots,a_d^\ast,
    b_1^\ast,\ldots,b_d^\ast,
    q_{\mi^{(1)}},\ldots,q_{\mi^{(2^d)}}
    )
    \end{equation}
    to \eqref{LP}, which
    satisfies 
    \begin{equation}
        \label{bi-equal-to-1}
            b_1^\ast=\ldots=b_d^\ast=1,
    \end{equation}
    then the optimal value of \eqref{LP} is the most negative volume over all boxes over all $d$-quasi-copulas.
\end{proposition}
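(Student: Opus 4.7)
The plan is to establish that \eqref{LP} is a tight relaxation of the problem of computing the most negative $Q$-volume, and that the hypothesis \eqref{bi-equal-to-1} is exactly the compatibility condition needed to invoke Theorem \ref{main} in the reverse direction. The argument splits into two inclusions.

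For the easy direction I would show that the LP value is a lower bound on $\inf_{Q,\cB} V_Q(\cB)$. Given any $d$-quasi-copula $Q$ and any $d$-box $\cB = \prod_{i=1}^d [\alpha_i,\beta_i]$ with $\alpha_i<\beta_i$, set $a_i:=\alpha_i$, $b_i:=\beta_i$, and $q_\mi:=Q(x_\mi)$ for every $\mi\in\{0,1\}^d$, and verify feasibility of $(a,b,q)$ for \eqref{LP}. The box bounds $0\le a_i<b_i\le 1$ are immediate. The adjacency inequalities $0\le q_\mj-q_\mi\le b_\ell-a_\ell$ for $\mi\sim_\ell\mj$ are exactly \ref{Mon} and \ref{LC} applied to two vertices differing only in the $\ell$-th coordinate by $b_\ell-a_\ell$. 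The Fréchet--Hoeffding-type bounds $\max\{0,G_d(x_\mi)\}\le q_\mi\le H_d(x_\mi)$ are standard pointwise consequences of the quasi-copula axioms: the upper bound follows by combining \ref{Mon} with \ref{BC}(b), while the lower bound follows by applying \ref{LC} between $x_\mi$ and each $(1,\ldots,1,(x_\mi)_k,1,\ldots,1)$, whose $Q$-value is $(x_\mi)_k$ by \ref{BC}(b), together with non-negativity from \ref{BC}(a) and \ref{Mon}. Since the objective at this feasible point equals $V_Q(\cB)$ by the very definition of $Q$-volume, the LP optimum does not exceed $V_Q(\cB)$.

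For the reverse direction I use the hypothesis \eqref{bi-equal-to-1}. With $b_i^\ast=1$ for all $i$, the LP constraints on the remaining data $(a_i^\ast,q_{\mi^{(k)}})$ become exactly the hypotheses \eqref{th:main-cond-2} of Theorem \ref{main}: the Lipschitz constraint $0\le q_\mj-q_\mi\le b_\ell^\ast-a_\ell^\ast$ collapses to $0\le q_\mj-q_\mi\le 1-a_\ell^\ast$; the combined bound $\max\{0,G_d(x_\mi)\}\le q_\mi\le H_d(x_\mi)$ is equivalent to $q_\mi\in[0,1]$ together with $G_d(x_\mi)\le q_\mi\le H_d(x_\mi)$ (since $H_d(x_\mi)\le 1$); and the vertex prescription in the LP coincides with \eqref{def:xI-v4}. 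Theorem \ref{main} then furnishes a quasi-copula $Q:[0,1]^d\to[0,1]$ with $Q(x_\mi)=q_\mi$ for every $\mi$. Setting $\cB^\ast:=\prod_{i=1}^d[a_i^\ast,1]$ one obtains $V_Q(\cB^\ast)=\sum_\mi\sign(\mi)q_\mi$, equal to the LP optimum. Combined with the easy direction, this identifies the LP optimum with the most negative $V_Q(\cB)$ over all quasi-copulas $Q$ and all boxes $\cB$.

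The main obstacle is conceptual rather than computational: the extension produced by Theorem \ref{main} only handles grids whose ``upper'' endpoints all coincide with $1$. If the LP were optimised at a point with some $b_i^\ast<1$, the direct identification above would fail, and one would be forced to pass to the $3^d$-grid relaxation \eqref{LP-extension} backed by Theorem \ref{main-v2}. Hence the assumption $b_1^\ast=\ldots=b_d^\ast=1$ is precisely the alignment between \eqref{LP} and the extension theorem available in this section, and is what makes the relaxation tight.
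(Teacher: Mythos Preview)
Your proposal is correct and follows the same approach as the paper, namely invoking Theorem~\ref{main} on the optimal solution once $b_i^\ast=1$ for all $i$. In fact your write-up is more complete: the paper's proof is a single sentence citing Theorem~\ref{main} and leaves the ``easy direction'' (that every quasi-copula and box yields a feasible point of \eqref{LP}, so the LP optimum is a genuine lower bound) implicit, whereas you spell it out.
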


\begin{proof}
    Use Theorem \ref{main} for the optimal solution 
    \eqref{optimal-soln} to conclude that this solution indeed extends 
    to some quasi-copula.
\end{proof}

By Proposition \ref{relaxation}, the linear program
\eqref{LP}
relaxes the problem of determining a $d$-quasi-copula $Q$,
such that the volume $V_Q(\cB)$ is minimal among all
$d$-quasi-copulas and all boxes $\cB\subseteq [0,1]^d$.
The relaxation refers to the fact, that if \eqref{bi-equal-to-1} does not hold, then the optimal solution to
\eqref{LP} might not extend to some quasi-copula and hence the optimal value of \eqref{LP}
only represents the lower bound for the minimal volume problem.\\

We wrote a function, which generates the linear program \eqref{LP} given dimension $d$, using software tools \textit{Mathematica} \cite{ram2024} and the modeling language \textit{JuMP} \cite{Jump2023} implemented in \textit{Julia} \cite{bezanson_julia_2017}. Then we used the simplex method in \textit{Mathematica} and a combination of the simplex and the interior point method using \textit{Julia} bindings \cite{Highs3} to \textit{HiGHS} solver \cites{Highs,Highs2} to solve it. The advantage of \textit{Mathematica} is that it does exact computations with rational numbers, while \textit{HiGHS} in \textit{Julia} uses floating point arithmetic. However, \textit{Mathematica} could compute the results up to dimension $d=9$ before running out of memory, while \textit{Julia} computed the results up to $d=18$ in reasonable time 
($55\text{s}$ for $d=16$, $241\text{s}$ for $d=17$ and $8712\text{s}$ for $d=18$).
In Table \ref{tab:table1} below we list an optimal solutions to \eqref{LP}.
For dimension $d>9$ the results are rounded to a rational number with absolute error lower than $10^{-6}$, while for $d\leq 9$ the results are exact from \textit{Mathematica}. The program has been run on a personal computer with \emph{AMD Ryzen 7 6800HS} processor and $16\mathrm{GB}$ of RAM.
The code and the results are publicly available in the Gitlab repository \cite{Quasi-copula-Gitlab}.

\begin{table}[h!]
  \begin{center}
      \caption{
      Minimal values of $V_Q(B)$ over all $d$--variate quasi--copulas $Q$ and all $d$--boxes $B\subseteq [0,1]^d$. 
      It turns out that the minimal box $B_{\min}$ is of the form $[a,b]^d$ and that the values $q_{\mi}$, $\mi=(\mi_1,\ldots,\mi_d)\in \{0,1\}^d$,
      depend only on 
      $\|\mi\|_1=\sum_{j=1}^d \mi_j$.
      We list $q_{\|\mi\|_1}:=q_\mi$.
      }
    \label{tab:table1}
    {\renewcommand{\arraystretch}{2}
    \begin{tabular}{|c|c|c|c|c|}
    \hline
    $d$ & $a$ & $b$ & 
    $\vec{q}=(q_{\|\mi\|_1})_{\|\mi\|_1=0}^d$ & $V_Q([a,b]^d)$ 
    \\
    \hline
    2 & $\frac 13$ & $\frac 23$ & 
    $\left(0,\frac{1}{3},\frac{1}{3}\right)$
    &
    $-\frac 13$  \\
    \hline
    3 & $\frac 25$ & $\frac 45$ &
    $\left(0,0,\frac{2}{5},\frac 25\right)$
    & $-\frac 45$  \\
    \hline
    4 & $\frac 37$ & $\frac 67$ & 
    $\left(0,0,0,\frac{3}{7},\frac 37\right)$
    &$-1\frac{2}{7}$\\
    \hline
    5 & $\frac{8}{13}$ & $\frac{12}{13}$ & 
    $\left(0,0,\frac{4}{13},\frac {4}{13},\frac{8}{13},\frac{8}{13}\right)$
    &
    $-2\frac{6}{13}$  \\
    \hline
    6 & $\frac 58$ & $\frac{15}{16}$ 
    &
    $\left(0,0,0,\frac{5}{16},
    \frac{5}{16},\frac{5}{8},
    \frac{5}{8}\right)$
    & $-4\frac{11}{16}$ \\
    \hline
    7 & $\frac 12$ & $1$ & 
    $\left(0,0,0,0,\frac{1}{2},
    \frac 12,\frac{1}{2},1\right)$
    &$-9\frac{1}{2}$  \\
    \hline
    8 & $\frac 23$ & $1$ & 
    $\left(0,0,0,\frac 13,\frac 13,\frac{2}{3},\frac 23,\frac{2}{3},1\right)$
    &
    $-18\frac 13$ \\
    \hline
    9 & $\frac 23$ & $1$ & 
    $\left(0,0,0,0,\frac 13,\frac 13,\frac{2}{3},\frac 23,\frac{2}{3},1\right)$
    & $-37$ \\
    \hline
    10 & $\frac 23$ & 1 & 
$\left(0,0,0,0,0,\frac 13,\frac 13,\frac{2}{3},\frac 23,\frac{2}{3},1\right)$
    &
    $ -69\frac{2}{3}$ \\\hline
    11 & $\frac 12$ & 1 & 
    $\left(0,0,0,0,0,0,\frac 12,\frac 12,\frac 12,\frac 12,\frac 12,1\right)$
    &
    $-125\frac{1}{2}$ \\\hline
    12 & $\frac 23$ & 1 & 
    $\left(0,0,0,0,0,\frac 13,
    \frac 13,\frac 23,\frac 23,
    \frac 23,\frac 23,\frac 23,1\right)$
    &
    $-263\frac{2}{3}$ \\\hline
    13 & $\frac 23$ & 1  & 
    $\left(
    0,0,0,0,0,0,\frac 13,
    \frac 13,\frac 23,
    \frac 23,\frac 23,\frac 23,
    \frac 23,1
    \right)$
    &
    $-527\frac{2}{3}$ \\\hline
    14 & $\frac 23$ & 1 
    &
    $\left(
    0,0,0,0,0,0,0,\frac 13,
    \frac 13, \frac 23,
    \frac 23,
    \frac 23,
    \frac 23,
    \frac 23,
    1
    \right)$
    & $-1000\frac{2}{3}$ \\\hline
    15 & $\frac 34$ & 1 & 
    $\left(
    0,0,0,0,0,0,
    \frac 14,
    \frac 14,
    \frac 12,
    \frac 12,
    \frac 34,
    \frac 34,
    \frac 34,
    \frac 34,
    \frac 34,
    1
    \right)$
    &$-1858\frac{3}{4}$ \\\hline
    16 & $\frac 23$ & 1 & 
    $\left(
    0,0,0,0,0,0,0,
    \frac 13,
    \frac 13,
    \frac 23,
    \frac 23,
    \frac 23,
    \frac 23,
    \frac 23,
    \frac 23,
    \frac 23,
    1
    \right)$
    & $-3813$ \\\hline 
    17 & $\frac 23$ & 1 &
    $\left(
    0,0,0,0,0,0,0,0,
    \frac 13,
    \frac 13,
    \frac 23,
    \frac 23,
    \frac 23,
    \frac 23,
    \frac 23,
    \frac 23,
    \frac 23,
    1
    \right)$
    &$-7626\frac{1}{3}$\\\hline
    18 & $\frac 23$ & 1 &
    $\left(
    0,0,0,0,0,0,0,0,0,
    \frac 13,
    \frac 13,
    \frac 23,
    \frac 23,
    \frac 23,
    \frac 23,
    \frac 23,
    \frac 23,
    \frac 23,
    1
    \right)$
    &$-14585\frac{2}{3}$\\\hline
    \end{tabular}
    }
    \end{center}
\end{table}

\newpage

Note that for $d\geq 7$ the solutions to \eqref{LP} are attained for $b_1=\ldots=b_d=1$ and hence by Proposition \ref{relaxation}, they solve the problem of minimal volumes of $V_Q(B)$. For dimensions $d\leq 6$ we needed to confirm that optimal solutions indeed extend to some quasi-copula. 
 As stated in Section \ref{sec:intro}, the realizations of $Q$ for $d\leq 4$ are already known \cite{Nelsen2002,BMUF07,UF23}, while here we need to find realizations for $d=5$ and $d=6$. Using Theorem \ref{main-v2} for this purpose, we need to find the values $q_\mi = Q(x_\mi)$ for all the vertices $x_\mi$ in the grid  $\prod_{i=1}^d \{a_1, b_1, 1\}$. We define another linear program to find these values:

\begin{align}
\label{LP-extension}
\begin{split}
\min_{
\substack{
    a_1,\ldots,a_d,\\
    b_1,\ldots,b_d,\\
    q_{\mi} \text{ for }\mi\in \{0,1,2\}^d
}}
&\hspace{1cm} \sum_{\mi\in \{0,1\}^d} \sign(\mi)\cdot q_\mi,\\
\text{subject to }
&\hspace{0.5cm} 
    0\leq a_i< b_i\leq 1\quad i=1,\ldots,d,\\
&\hspace{0.5cm} 
0\le q_{\mj} - q_{\mi} \le b_\ell - a_\ell
\quad 
    \text{for all }\ell=1,\ldots,d\\
    &\hspace{4cm}
    \text{and all }\mi\sim_\ell\mj
    \text{ with }\mj_\ell=1,\\
&\hspace{0.5cm}
0\le q_{\mj} - q_{\mi} \le 1-b_\ell
\quad 
    \text{for all }\ell=1,\ldots,d\\
    &\hspace{4cm}
    \text{and all }\mi\sim_\ell\mj
    \text{ with }\mj_\ell=2,\\
&\hspace{0.5cm} 
    \max\{0,G_d(x_{\mi})\} 
    \le q_\mi \le 
    H_d(x_{\mi})
    \quad \text{for all }\mi\in \{0,1,2\}^d.
\end{split}
\end{align}

\begin{proposition}
    \label{exact}
    Let $\mi^{(1)},\ldots,\mi^{(3^d)}$ be some order of all multi-indices $\mi\in\{0,1,2\}^d$.
    An optimal solution 
    $(a_1^\ast,\ldots,a_d^\ast,
    b_1^\ast,\ldots,b_d^\ast,
    q_{\mi^{(1)}},\ldots,q_{\mi^{(3^d)}}
    )$
    to \eqref{LP-extension}
    extends to some quasi-copula $Q:[0,1]^d\to [0,1]$
    and 
    the optimal value of \eqref{LP-extension} is the most negative volume over all boxes over all $d$-quasi-copulas.
\end{proposition}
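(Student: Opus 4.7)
The plan is to show that \eqref{LP-extension} is an \emph{exact} encoding (as opposed to a relaxation) of the problem of minimizing $V_Q(\cB)$ over all $d$-quasi-copulas $Q$ and all $d$-boxes $\cB\subseteq[0,1]^d$, with Theorem \ref{main-v2} doing all the heavy lifting. The argument splits into two matching inequalities between the LP optimum and $\inf_{Q,\cB} V_Q(\cB)$.

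First I would observe that the constraints of \eqref{LP-extension} coincide with the hypotheses \eqref{th:main-v2-cond-1} of Theorem \ref{main-v2}: the two families of monotonicity/Lipschitz-type inequalities match verbatim, and the compound bound $\max\{0,G_d(x_\mi)\}\le q_\mi\le H_d(x_\mi)\le 1$ packages exactly $q_\mi\in[0,1]$ together with $G_d(x_\mi)\le q_\mi\le H_d(x_\mi)$. Hence an optimal point $(a^\ast,b^\ast,q^\ast)$ of \eqref{LP-extension} extends, via Theorem \ref{main-v2}, to a quasi-copula $Q\colon[0,1]^d\to[0,1]$ with $Q(x_\mi)=q_\mi^\ast$ for every $\mi\in\{0,1,2\}^d$. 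The points $x_\mi$ indexed by $\mi\in\{0,1\}^d$ are precisely the vertices of the box $\cB^\ast:=\prod_{i=1}^d[a_i^\ast,b_i^\ast]$, so the LP objective equals $V_Q(\cB^\ast)$; this gives LP-opt $\ge\inf_{Q,\cB}V_Q(\cB)$, and in fact the infimum is attained.

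Conversely, I would take any quasi-copula $Q'$ and any $d$-box $\cB'=\prod_{i=1}^d[a_i',b_i']\subseteq[0,1]^d$ with $a_i'<b_i'$, and set $q_\mi:=Q'(x_\mi)$ on the grid $\prod_{i=1}^d\{a_i',b_i',1\}$. The monotonicity condition \ref{Mon} of $Q'$ then yields $q_\mj\ge q_\mi$ for $\mi\sim_\ell\mj$, the Lipschitz condition \ref{LC} gives the required upper bounds $b_\ell-a_\ell$ (when $\mj_\ell=1$) and $1-b_\ell$ (when $\mj_\ell=2$), and the Fr\'echet--Hoeffding bounds $W\le Q'\le M$, valid for every quasi-copula as a consequence of \ref{BC}, \ref{Mon}, \ref{LC}, produce the outer inequalities $\max\{0,G_d(x_\mi)\}\le q_\mi\le H_d(x_\mi)$. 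By construction the LP objective at this feasible point equals $V_{Q'}(\cB')$, giving the reverse inequality and hence equality with the LP optimum.

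The one subtlety I would have to handle is the degenerate case $b_i^\ast=1$, where two upper slices of the grid collapse; but then the constraint $0\le q_\mj-q_\mi\le 1-b_i^\ast=0$ automatically forces the duplicated $q$-values to agree, so Theorem \ref{main-v2} still applies after identifying redundant indices. Apart from this minor bookkeeping, the proof is a direct translation between the LP feasibility constraints and the hypotheses of Theorem \ref{main-v2}, and the crucial point is that—unlike in Proposition \ref{relaxation}—no side hypothesis such as $b_i^\ast=1$ is needed to guarantee extension.
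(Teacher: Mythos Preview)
Your proof is correct and follows the same approach as the paper, namely invoking Theorem \ref{main-v2} to extend the optimal LP data to a genuine quasi-copula. The paper's own proof is a one-liner citing only Theorem \ref{main-v2}; your version is more complete in that it also spells out the converse inequality (any pair $(Q',\cB')$ yields a feasible LP point with objective $V_{Q'}(\cB')$) and the degenerate boundary case $b_i^\ast=1$, both of which the paper leaves implicit.
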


\begin{proof}
    Use Theorem \ref{main-v2} to conclude that the optimal solution 
    to \eqref{LP-extension} indeed extends 
    to some quasi-copula.
\end{proof}

Using the linear program \eqref{LP-extension}
it turns out that the solutions from Table \ref{tab:table1}
for  $d=5$ and $d=6$ indeed extend to quasi-copulas,
since \eqref{LP-extension} has the same optimal value as \eqref{LP}.
It turned out that these extensions are not unique, but we strived for the most symmetric extension, i.e., the value of $Q$ is invariant with respect to the permutation of coordinates.
Below we state concretely both realizations, i.e., 
Examples \ref{ex:d=5} and \ref{ex:d=6} for $d=5$ and $d=6$, respectively.
Before we do so, we need some further notations.\\

Let $\calD=\prod\{a_i,b_i,1\}$ with $0<a_i<b_i<1$.
We will use multi-indices
of the form $\mi := (\mi_1, \mi_2, \ldots, \mi_d) \in \{0, 1,d+1\}^d$ to index 
$3^d$ elements of $\calD$.
The reason for this is to easily state bijective correspondence between the set of vertices of the $3$-grid and numbers $q_i$, which are attained by $Q$. Using the index set $\{0,1,2\}^d$ as before would make this correspondence more difficult to describe. 
We write 
    $\mathbf{x}_\mi:=((x_\mi)_1,\ldots,(x_\mi)_d)$ 
to denote the vertex with coordinates
\begin{equation*}
    (x_\mi)_k =
    \left\{
    \begin{array}{rl}
    a_k,& \text{if }\mi_k=0,\\
    b_k,& \text{if }\mi_k=1,\\
    1,& \text{if }\mi_k=d+1.
    \end{array}
    \right.
\end{equation*}
Note that $\|\mi\|_1$ is an integer between 0 and $d(d+1)$.
The value of $Q:\calD\to [0,1]$ in the point $x_\mi$
is 
    $q_\mi := Q(\mathbf{x}_\mi).$

\begin{ex}
\label{ex:d=5}
Let $d=5$ and 
$\calD=\prod_{i=1}^{5}\{\frac{8}{13},\frac{12}{13},1\}$.
We define
\begin{equation}
\label{val:q-5}
    q_i =
    \left\{
    \begin{array}{rl}
    \frac{1}{13},& \text{if }i=6,\\[0.5em]
    \frac{4}{13},& \text{if }i\in \{2,3,7,8\},\\[0.5em]
    \frac{5}{13},& \text{if }i\in \{12,13\},\\[0.5em]
    \frac{6}{13},& \text{if }i=18,\\[0.5em]
    \frac{8}{13},&  \text{if }i\in \{4,5,9,14,19,24\},\\[0.5em]
    \frac{9}{13},&  \text{if }i=10,\\[0.5em]
    \frac{10}{13},&  \text{if }i=15,\\[0.5em]
    \frac{11}{13},&  \text{if }i=20,\\[0.5em]
    \frac{12}{13},&  \text{if }i=25,\\[0.5em]
    1,& \text{if }i=30,\\[0.5em]
    0,& \text{for other }0\leq i\leq 30.
    \end{array}
    \right.
\end{equation}
Then $q_{\mi}:=q_{\|\mi\|_1}$ extends the function from Table \ref{tab:table1}
defined on $\prod_{i=1}^{5}\{\frac{8}{13},\frac{12}{13}\}$
to $\calD$,
and is an optimal solution to \eqref{LP-extension} with $a_1=\ldots=a_5=\frac{8}{13}$
and $b_1=\ldots=b_5=\frac{12}{13}$. By Proposition \ref{exact}, this function in turn extends to a quasi-copula.
Note that the values $i$ in the right column of \eqref{val:q-5} correspond to elements $(c_1,c_2,c_3,c_4,c_5)$ of $\calD$ in the following way:
$$
\begin{array}{|c|c|c|c|}
\hline
i & 
    \# \text{coordinates }\frac{8}{13}
  & \# \text{coordinates }\frac{12}{13}
  & \# \text{coordinates }1\\[0.2em]
\hline
2 & 3 & 2 & 0 \\\hline
3 & 2 & 3 & 0 \\\hline
4 & 1 & 4 & 0 \\\hline
5 & 0 & 5 & 0 \\\hline
6 & 4 & 0 & 1 \\\hline
7 & 3 & 1 & 1 \\\hline
8 & 2 & 2 & 1 \\\hline
9 & 1 & 3 & 1 \\\hline
10 & 0 & 4 & 1 \\\hline
12 & 3 & 0 & 2 \\\hline
13 & 2 & 1 & 2 \\\hline
14 & 1 & 2 & 2 \\\hline
15 & 0 & 3 & 2 \\\hline
18 & 2 & 0 & 3 \\\hline
19 & 2 & 1 & 3 \\\hline
20 & 0 & 2 & 3 \\\hline
24 & 1 & 0 & 4 \\\hline
25 & 0 & 1 & 4 \\\hline
30 & 0 & 0 & 5 \\\hline
\end{array}
$$
\end{ex}

\begin{remark}
Here we mention that the solution to \eqref{LP-extension} in case $d=5$ is not unique. Without implementing the condition that the
values of the solution are invariant with respect to permuting the coordinates of the multi-indices, i.e., $q_\mi=q_{\mj}$ if the coordinates of $\mj$ are obtained by permuting the ones of $\mi$, we got a different solution with the same box and the same minimal volume.
\end{remark}

\begin{ex}
\label{ex:d=6}
Let $d=6$ and 
$\calD=\prod_{i=1}^{5}\{\frac{5}{8},\frac{15}{16},1\}$.
We define 
\begin{equation}
\label{val:q-6}
    q_i =
    \left\{
    \begin{array}{rl}
    \frac{1}{16},& \text{if }i=8,\\[0.5em]
    \frac{1}{8},& \text{if }i=14,\\[0.5em]
    \frac{5}{16},&   \text{if }i\in \{3,4,9,15,21\},\\[0.5em]
    \frac{3}{8},&    \text{if }i=10,\\[0.5em]
    \frac{7}{16},&    \text{if }i=16,\\[0.5em]
    \frac{1}{2},&    \text{if }i\in\{22,28\},\\[0.5em]
    \frac{5}{8},&    \text{if }i\in\{5,11,17,23,29\},\\[0.5em]
    \frac{11}{16},&    \text{if }i=12,\\[0.5em]
    \frac{3}{4},&    \text{if }i=18,\\[0.5em]
    \frac{13}{16},&    \text{if }i=24,\\[0.5em]
    \frac{7}{8},&    \text{if }i=30,\\[0.5em]
    \frac{15}{16},&    \text{if }i=36,\\[0.5em]
    1,&    \text{if }i=42,\\[0.2em]
    0,& \text{otherwise}.
    \end{array}
    \right.
\end{equation}
Then $q_{\mi}:=q_{|\mi|}$ extends the function from Table \ref{tab:table1}
defined on $\prod_{i=1}^{6}\{\frac{5}{8},\frac{15}{16}\}$
to $\calD$,
and is an optimal solution to \eqref{LP-extension} with $a_1=\ldots=a_6=\frac{5}{8}$
and $b_1=\ldots=b_6=\frac{15}{16}$. By Proposition \ref{exact}, this function in turn extends to a quasi-copula.
Note that the values $i$ in the right column of \eqref{val:q-6} correspond to elements $(c_1,c_2,c_3,c_4,c_5,c_6)$ of $\calD$ in the following way:
$$
\begin{array}{|c|c|c|c|}
\hline
i & 
    \# \text{coordinates }\frac{5}{8}
  & \# \text{coordinates }\frac{15}{16}
  & \# \text{coordinates }1\\[0.2em]
\hline
3 & 3 & 3 & 0 \\\hline
4 & 2 & 4 & 0 \\\hline
5 & 1 & 5 & 0 \\\hline
7 & 5 & 0 & 1 \\\hline
8 & 5 & 1 & 1 \\\hline
9 & 3 & 2 & 1 \\\hline
10 & 2 & 3 & 1 \\\hline
11 & 1 & 4 & 1 \\\hline
12 & 0 & 5 & 1 \\\hline
14 & 5 & 0 & 2 \\\hline
15 & 3 & 1 & 2 \\\hline
16 & 2 & 2 & 2 \\\hline
17 & 1 & 3 & 2 \\\hline
18 & 0 & 4 & 2 \\\hline
21 & 3 & 0 & 3 \\\hline
22 & 1 & 3 & 2 \\\hline
23 & 1 & 2 & 3 \\\hline
24 & 0 & 3 & 3 \\\hline
28 & 2 & 0 & 4 \\\hline
29 & 1 & 1 & 4 \\\hline
30 & 0 & 2 & 4 \\\hline
36 & 0 & 1 & 5 \\\hline
42 & 0 & 0 & 6 \\\hline
\end{array}
$$
\end{ex}

\newpage

\subsection{Maximal volume problem}
\label{subsec:maximal}
Let us define the following linear program:

\begin{align}
\label{LP-max}
\begin{split}
\max_{
\substack{
    a_1,\ldots,a_d,\\
    b_1,\ldots,b_d,\\
    q_{\mi} \text{ for }\mi\in \{0,1\}^d
}}
&\hspace{0.2cm} \sum_{\mi\in [1]^d} \sign(\mi)\cdot q_\mi,\\
\text{subject to }
&\hspace{0.2cm} 
    0\leq a_i< b_i\leq 1\quad i=1,\ldots,d,\\
&\hspace{0.2cm} 
0\le q_{\mj} - q_{\mi} \le b_\ell - a_\ell
\quad 
    \text{for all }\ell=1,\ldots,d
    \text{ and all }\mi\sim_\ell\mj,\\
&\hspace{0.2cm} 
    \max\{0,G_d(x_{\mi})\} 
    \le q_\mi \le 
    H_d(x_{\mi})
    \quad \text{for all }\mi\in \{0,1\}^d.
\end{split}
\end{align}

\begin{proposition}
    \label{relaxation-max}
    Let $\mi^{(1)},\ldots,\mi^{(2^d)}$ be some order of all multi-indices $\mi\in\{0,1\}^d$.
    If the optimal solution 
    \begin{equation}
    \label{optimal-soln-max}
    (a_1^\ast,\ldots,a_d^\ast,
    b_1^\ast,\ldots,b_d^\ast,
    q_{\mi^{(1)}},\ldots,q_{\mi^{(2^d)}}
    )
    \end{equation}
    to \eqref{LP-max}
    satisfies 
    \begin{equation}
        \label{bi-equal-to-1-max}
            b_1^\ast=\ldots=b_d^\ast=1,
    \end{equation}
    then the optimal value of \eqref{LP-max} is the most positive volume over all boxes over all $d$-quasi-copulas.
\end{proposition}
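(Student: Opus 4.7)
The plan mirrors the proof of Proposition \ref{relaxation} almost verbatim, since the only change is the direction of optimization. First I would establish that \eqref{LP-max} is indeed a relaxation of the maximum volume problem: given any $d$-quasi-copula $Q$ and any $d$-box $\cB = \prod_{i=1}^d[a_i,b_i] \subseteq [0,1]^d$ with $a_i < b_i$, the tuple obtained by setting $q_\mi := Q(x_\mi)$ satisfies all the constraints of \eqref{LP-max}. Indeed, the Lipschitz differences $0 \le q_\mj - q_\mi \le b_\ell - a_\ell$ for $\mi \sim_\ell \mj$ follow from \ref{Mon} and \ref{LC}, while the bounds $\max\{0, G_d(x_\mi)\} \le q_\mi \le H_d(x_\mi)$ come from the Fr\'echet--Hoeffding inequalities (the lower bound $G_d(x_\mi)$ together with non-negativity, and the upper bound from \ref{BC} combined with \ref{Mon}). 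Consequently the optimal value of \eqref{LP-max} is at least the supremum of $V_Q(\cB)$ over all quasi-copulas $Q$ and all boxes $\cB$.

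For the reverse inequality, I would invoke Theorem \ref{main} applied to the optimal data \eqref{optimal-soln-max}. The assumption \eqref{bi-equal-to-1-max} means the grid $\prod_{i=1}^d \{a_i^\ast, b_i^\ast\} = \prod_{i=1}^d \{a_i^\ast, 1\}$ is precisely the $2^d$-element grid appearing in Theorem \ref{main}. The LP constraints translate directly into the hypotheses \eqref{th:main-cond-2} of that theorem (the constraint $\max\{0, G_d(x_\mi)\} \le q_\mi$ is equivalent to $G_d(x_\mi) \le q_\mi$ together with $q_\mi \ge 0$, which holds by the Lipschitz chain from $q_{(0,\ldots,0)}$). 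Theorem \ref{main} then produces a genuine $d$-quasi-copula $Q^\ast : [0,1]^d \to [0,1]$ with $Q^\ast(x_\mi) = q_{\mi^{(\cdot)}}$ for all $\mi \in \{0,1\}^d$. The $Q^\ast$-volume of the box $\cB^\ast = \prod_{i=1}^d [a_i^\ast, 1]$ then equals $\sum_{\mi \in \{0,1\}^d} \sign(\mi) \cdot q_{\mi^{(\cdot)}}$, which is the optimal value of \eqml{LP-max}, establishing the reverse inequality.

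There is no real obstacle here; the work has been done entirely in Theorem \ref{main}, and what remains is a bookkeeping verification that the LP constraints match the hypotheses of that theorem. The only point worth a sentence of care is the equivalence between $\max\{0, G_d(x_\mi)\} \le q_\mi \le H_d(x_\mi)$ in the LP and the condition $G_d(x_\mi) \le q_\mi \le H_d(x_\mi)$ of Theorem \ref{main}, which is immediate because non-negativity of $q_\mi$ is forced by the Lipschitz chain from $q_{(0,\ldots,0)} = 0$ (enforced by $H_d(x_{(0,\ldots,0)}) = 0$ since the first coordinate of $x_{(0,\ldots,0)}$ is $a_1$ and, more importantly, the boundary constraints $q_\mi \le H_d(x_\mi)$ at indices with a zero coordinate collapse to $q_\mi = 0$ when any $a_i = 0$). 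With this remark the proof is essentially a one-line appeal to Theorem \ref{main}, exactly parallel to Proposition \ref{relaxation}.
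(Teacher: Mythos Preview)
Your approach is correct and matches the paper's proof, which is literally a one-line appeal to Theorem \ref{main}; you simply spell out more of the surrounding logic (in particular the relaxation direction, which the paper leaves implicit). One small remark: your final paragraph worries about the \emph{equivalence} of the LP constraint $\max\{0,G_d(x_\mi)\}\le q_\mi$ with the Theorem \ref{main} hypothesis $G_d(x_\mi)\le q_\mi$, but for the direction you need here (LP feasibility $\Rightarrow$ Theorem \ref{main} hypotheses) the LP constraint is visibly stronger, so no argument is required at all; the Lipschitz-chain justification you sketch is unnecessary and a bit muddled (e.g., $q_{(0,\ldots,0)}$ need not be zero when all $a_i>0$).
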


\begin{proof}
    Use Theorem \ref{main} for the optimal solution 
    \eqref{optimal-soln-max} to see that this solution indeed extends 
    to some quasi-copula.
\end{proof}

By Proposition \ref{relaxation-max}, the linear program
\eqref{LP-max}
relaxes the problem of determining a quasi-copula $Q$,
such that the volume $V_Q(\cB)$ is maximal among all
$d$-quasi-copulas and all boxes $\cB\subseteq [0,1]^d$.
The relaxation refers to the fact, that if \eqref{bi-equal-to-1-max} does not hold, then the optimal solution to
\eqref{LP-max} might not extend to some quasi-copula and hence the optimal value of \eqref{LP-max} represents the upper bound for the maximal volume problem.\\

As in the case of the minimal volume problem we used computer software 
\textit{Julia} to compute the optimal solutions to \eqref{LP-max}  up to $d= 17$. The execution time was $213\text{s}$ for $d=15$, $75\text{s}$ for $d=16$ and $3073\text{s}$ for $d=17$ using a personal computer with \emph{AMD Ryzen 7 6800HS} processor and $16\mathrm{GB}$ of RAM. 
Note that the execution time decreased when increasing dimension from $d=15$ to $d=16$. This is due to the fact that the methods used are numerical and as such also depend on some random steps in the procedure. In general larger dimension means larger problem and longer execution time but such anomalies as in this case might happen. 
In Table \ref{tab:table2} below we list the results obtained (see  Gitlab repository \cite{Quasi-copula-Gitlab}).
\newpage

\begin{table}
  \begin{center}
      \caption{Maximal values of $V_Q(B)$ over all $d$--variate quasi--copulas $Q$ and all $d$--boxes $B\subseteq [0,1]^d$. 
      It turns out that the maximal box $B_{\max}$ is of the form $[a,b]^d$ and that the values $q_{\mi}$, $\mi=(\mi_1,\ldots,\mi_d)\in \{0,1\}^d$,
      depend only on 
      $\|\mi\|_1=\sum_{j=1}^d \mi_j$.
      We list $q_{\|\mi\|_1}:=q_\mi$.
      }
    \label{tab:table2}
    {\renewcommand{\arraystretch}{2}
    \begin{tabular}{|c|c|c|c|c|}
    \hline
    $d$ & $a$ & $b$ & 
    $\vec{q}=(q_{\|\mi\|_1})_{\|\mi\|_1=0}^d$ & $V_Q([a,b]^d)$ 
    \\
    \hline
    2 & 
        0 & 1 & 
        $\left(0,0,1\right)$
        &
        1  \\
    \hline
    3 & 
        0 & 1 &
        $\left(0,0,0,1\right)$
        & 1 \\
    \hline
    4 & 
        $\frac 12$ & $1$ & 
        $\left(0,0,\frac 12,\frac 12,1\right)$
        &$2$\\
    \hline
    5 & 
        $\frac 12$ & $1$ & 
        $\left(0,0,0,\frac 12,\frac 12,1\right)$
        &
        $3\frac{1}{2}$  \\
    \hline
    6 & 
        $\frac 12$ & $1$ 
        &
        $\left(0,0,0,0,\frac 12,\frac 12,1\right)$
        & $5\frac{1}{2}$ \\
    \hline
    7 & 
        $\frac 23$ & $1$ & 
        $\left(0,0,0,\frac 13,\frac 13,\frac 23,\frac 23,1\right)$
        &   $10\frac{1}{3}$  \\
    \hline
    8 & 
        $\frac 23$ & $1$ & 
        $\left(0,0,0,0,
        \frac 13,\frac 13,\frac{2}{3},\frac 23,1\right)$
        &
        $19$ \\
    \hline
    9 & 
        $\frac 12$ & $1$ & 
        $\left(0,0,0,0,0,
        \frac 12,\frac 12,\frac 12,\frac 12,1\right)$
        & $35\frac 12$ \\
    \hline
    10 & 
        $\frac 23$ & 1 & 
        $\left(0,0,0,0,\frac 13,\frac 13,\frac{2}{3},\frac 23,\frac{2}{3},\frac 23,1\right)$
        &
        $ 70\frac{1}{3}$ \\
    \hline
    11 & 
        $\frac 23$ & 1 & 
        $\left(0,0,0,0,0,
        \frac 13,\frac 13,
        \frac 23,\frac 23,
        \frac 23,\frac 23,
        1\right)$
        &
        $140\frac{1}{3}$ \\
    \hline
    12 & 
        $\frac 23$ & 1 & 
        $\left(0,0,0,0,0,0,
        \frac 13,\frac 13,
        \frac 23,
        \frac 23,
        \frac 23,
        \frac 23,
        1\right)$
        &
        $264\frac{1}{3}$ \\
    \hline
    13 & 
        $\frac 34$ & 1  
        &
        $\left(
        0,0,0,0,0,
        \frac 14, 
        \frac 14, 
        \frac 12, 
        \frac 12, 
        \frac 34, 
        \frac 34, 
        \frac 34, 
        \frac 34, 
        1
        \right)$
        &
        $478\frac{3}{4}$ \\
    \hline
    14 & 
        $\frac 23$ & 1 
        &
        $\left(
        0,0,0,0,0,0,
        \frac 13, 
        \frac 13, 
        \frac 23, 
        \frac 23, 
        \frac 23, 
        \frac 23, 
        \frac 23, 
        \frac 23, 
        1
        \right)$
        & $1001\frac{1}{3}$ \\
    \hline
    15 & 
        $\frac 23$ & 1 & 
        $\left(
        0,0,0,0,0,0,0,
        \frac 13, 
        \frac 13, 
        \frac 23, 
        \frac 23, 
        \frac 23, 
        \frac 23, 
        \frac 23, 
        \frac 23, 
        1
        \right)$
        &$2002\frac{1}{3}$ \\
    \hline
    16 & 
        $\frac 23$ & 1 & 
        $\left(
        0,0,0,0,0,0,0,0,
        \frac 13, 
        \frac 13, 
        \frac 23, 
        \frac 23, 
        \frac 23, 
        \frac 23, 
        \frac 23, 
        \frac 23, 
        1
        \right)$
        & $3813\frac 23$ \\\hline
    17 & $\frac 23$ & 1 &
    $\left(
    0,0,0,0,0,0,0,
    \frac 14,
    \frac 14,
    \frac 12,
    \frac 12,
    \frac 34,
    \frac 34,
    \frac 34,
    \frac 34,
    \frac 34,
    \frac 34,
    1
    \right)$
    &$7221\frac{3}{4}$\\
    \hline
    \end{tabular}
    }
    \end{center}
\end{table}

Note that for $d\geq 2$ the solutions to \eqref{LP-max} are attained for $b_1=\ldots=b_d=1$ and hence by Proposition \ref{relaxation-max}, they indeed solve the problem of maximal volumes of $V_Q(B)$. There is no need for additional consideration of low-dimensional cases as compared to the minimal volume problem and $d\leq 6$.


\subsection{Graphical presentation of solutions and concluding remarks}
\label{subsec:concluding}


Results in Tables \ref{tab:table1} and \ref{tab:table2} above solve \cite[Open Problem 5]{ARIASGARCIA20201} up to $d=18$ for the minimal values and $d=17$ for the maximal values. Our results agree with \cite{Nelsen2002} for $d=2$, with \cite{BMUF07} for $d=3$ and with \cite{UF23} for $d=4$.
We disprove the conjecture from \cite{UF23}, which states that
the minimal value of $V_Q(\cB)$ in dimension $d$ is $-\frac{(d-1)^2}{2d-1}$, attained for some $Q$ on $\cB=\left[\frac{d-1}{2d-1},\frac{2d-2}{2d-1}\right]^d$.
Our results indicate that the growth is exponential of the form $c2^{d}+d$ for suitable $c,d\in \RR$
(see Figure \ref{fig:figure1} below), while boxes are of the form $[a,1]^d$, $a\in (0,1)$, for $d\geq 7$.

\begin{center}
\begin{figure}[h!]
      \caption{
      Blue (and orange) points represent the volumes of boxes with maximal (and minimal) volume over all $d$-quasi-copulas and all $d$-boxes. The $x$-axis represents the dimension $d$, while the $y$-axis represents the value of the volume in thee logarithmic scale with base 2.}
\includegraphics[width=12cm]{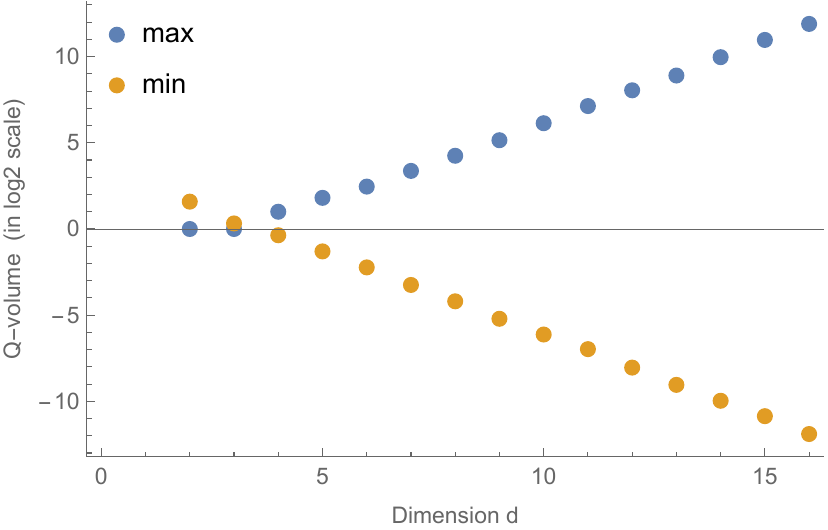}
\label{fig:figure1}
\end{figure}
\end{center}


The approach via linear programming to obtain results for $d$ larger than 18 does not seem to be feasible due to computer memory limits. However, we believe that a lot information about the true behavior of the extremes of the $Q$-volume is already captured in dimensions $d\leq 18$. In future, our perspective is to determine the formulas for extreme values in terms of the dimension $d$.  

The presented results suggest that there could be a general closed form formula for the minimal and maximal $Q$-volumes. 
Splitting the results according to the remainder of the division of $d$ with 4, it seems that
the formulas for the minimal volume for $d\geq 7$ are 
\begin{equation}
\label{conj:minimal}
\left\{
\begin{array}{rl}
    \frac{1}{3}\left[1- \binom{d}{\frac{d}{2}-1}\right],&   \text{if  }d\text{ mod } 4\in \{0,2\},\\[0.5em]
    \frac{1}{3}\left[1-2\binom{d-1}{\frac{d-1}{2}+1}\right],&   \text{if  }d\text{ mod } 4=1,\\  [0.5em]  
    \frac{1}{4}\left[1-3\binom{d-1}{\frac{d+3}{2}}+\binom{d}{\frac{d+3}{2}}
        -
        \binom{d}{\frac{d+1}{2}}\right],&   \text{if  }d\text{ mod } 4=3,\\  
\end{array}    
\right.
\end{equation}
and for the maximal volume for $d\geq 2$ they are
\begin{equation}
\label{conj:maximal}
\left\{
\begin{array}{rl}
    \frac{1}{3}\left[1+ \binom{d}{\frac{d}{2}-1}\right],&   \text{if  }d\text{ mod } 4\in \{0,2\},\\[0.5em]
           \frac{1}{4}\left[1+3\binom{d-1}{\frac{d+1}{2}}
        -\binom{d}{\frac{d+3}{2}}
        +
        \binom{d}{\frac{d+1}{2}}\right],&   \text{if  }d\text{ mod } 4=1,\\  [0.5em]
    \frac{1}{3}\left[1+2\binom{d-1}{\frac{d-1}{2}+1}\right],&   \text{if  }d\text{ mod } 4=3.
\end{array}    
\right.
\end{equation}
Let us explain how we came to this formulas. Observing the pattern of solutions in Tables \ref{tab:table1} and \ref{tab:table2} we can construct quasi-copulas and the boxes with these volumes. 
Let us demonstrate how to obtain a formula for $d\text{ mod } 4=0$ in \eqref{conj:minimal}.

\begin{ex}
\label{ex:min-4}
Let $d=4k$ with $k\in \NN$, $k\geq 2$ and $\cB=[\frac{2}{3},1]^d$. For $\mi \in \{0,1\}^d$ we define
$$
q_{\mi}
=
\left\{
    \begin{array}{rl}
        1,&             \text{if } \|\mi\|_1=d,\\[0.5em]
        \frac{2}{3},&   \text{if } \frac{d}{2}+1\leq \|\mi\|_1 \leq d-1,\\[0.5em]
        \frac{1}{3},&   \text{if } \|\mi\|_1\in \left\{\frac{d}{2},\frac{d}{2}-1\right\},\\[0.5em]
        0,&             \text{otherwise}.
    \end{array}
\right.
$$
Then for the quasi-copula obtained by Theorem \ref{main} we have that
    $$V_Q\Big(\Big[\frac{2}{3},1\Big]^d\Big)=
    \frac{1}{3}\left[1- \binom{d}{\frac{d}{2}-1}\right].$$
This is true by the following computation:
\begin{align*}
&V_Q\Big(\Big[\frac{2}{3},1\Big]^d\Big)=\sum_{
    \mi\in \{0,1\}^d
                } \sign(\mi)q_\mi\\
&=
q_{(1,\ldots,1)}
+
\sum_{
    \substack{\mi\in \{0,1\}^d,\\
                \frac{d}{2}+1\leq \|\mi\|_1\leq d-1
                }
                } (-1)^{d-\|\mi\|_1}q_\mi 
+
\sum_{
    \substack{\mi\in \{0,1\}^d,\\
                \|\mi\|_1\in \{\frac{d}{2},\frac{d}{2}-1\}
                }
                } (-1)^{d-\|\mi\|_1}q_\mi\\
&=
1-
    \frac{2}{3}
    \left[        
        \binom{d}{d-1}
        -
        \binom{d}{d-2}
        +\ldots
        +\binom{d}{\frac{d}{2}+1}
    \right]
+\frac{1}{3}
    \left[
        \binom{d}{\frac{d}{2}}-\binom{d}{\frac{d}{2}-1}    
    \right]\\
&=^{(1)}
\frac{1}{3}
+\frac{1}{3}
    \left[
        \binom{d}{d}
        -
        \binom{d}{d-1}
        +\ldots
        -\binom{d}{1}+\binom{d}{0}
    \right]
-\frac{1}{3}\binom{d}{\frac{d}{2}-1}\\
&=^{(2)}\frac{1}{3}\left[1- \binom{d}{\frac{d}{2}-1}\right],
\end{align*}
where in (1) we used 
    $1=\frac{1}{3}+\frac{1}{3}\binom{d}{d}+\frac{1}{3}\binom{d}{0}$
and put the second and the third summand in the middle bracket,
decomposed each summand from the middle bracket into 
    $\binom{d}{i}=\frac{1}{2}\binom{d}{i}+\frac{1}{2}\binom{d}{d-i}$,
and put $\frac{1}{3}\binom{d}{\frac{d}{2}}$ from the last bracket to the middle bracket, while in (2) we used that $0=(1-1)^d=\sum_{i=0}^d \binom{d}{i}(-1)^i.$\hfill$\blacksquare$
\end{ex}

Similarly as in Example \ref{ex:min-4} there are concrete realizations for other cases in \eqref{conj:minimal}
and \eqref{conj:maximal}. This shows that the minimal volume values are at most the ones stated in \eqref{conj:minimal}, while the maximal volume values are at least the ones stated in \eqref{conj:maximal}.   However, the linear programming approach we used to obtain the results does not offer further insights how to prove correctness of these formulas. 
It seems the solutions in dimensions $d\text{ mod } 4=3$ follow a different pattern than other solutions in case of minimal volumes, while for maximal volumes this observation applies to 
dimensions $d\text{ mod } 4=1$. It would be also interesting to explain the special behaviour of the minimal volume in dimensions $d<7$ and figure out whether there is some functional relation between the minimal and the maximal volume in each dimension.

To settle the  observations of the previous paragraph a software based linear programming approach 
is not appropriate, but a more analytical investigation is required. 
At this point such investigation seems to be difficult to carry out and will require additional insights
into the problem.

To conclude let us also mention there are other interesting special classes $\widetilde \cC_d$ of $d$-quasi-copulas,
strictly sandwiched betweeen the classes of $d$-copulas $\cC_d$ and $d$-quasi-copulas $\cQ_d$, i.e., 
    $\cC_d\subset \widetilde \cC_d \subset \cQ_d$. Examples of such classes are \emph{supermodular} quasi-copulas 
    and \emph{$k$-dimensional increasing} quasi-copulas for $k\in \{2,\ldots,d\}$ 
    (see \cite{ARIASGARCIA20201,AGMB17}).  It would be interesting to study extremal volume question for these classes as well in the future research.

\bigskip

 \noindent \textbf{Acknowledgement.}\
We would like to thank three anonymous reviewers for carefully reading our manuscript and
many suggestions to improve the overall presentation of our results.

 \end{document}